\crefname{equation}{}{}
\crefname{enumi}{}{}
\newcommand{\R}{\mathbb{R}}
\newcommand{\N}{\mathbb{N}}
\newcommand{\ee}{\mathrm{e}}
\DeclareDocumentCommand\dd{ o g d() }{
	\IfNoValueTF{#2}{
		\IfNoValueTF{#3}
			{\mathrm{d}\IfNoValueTF{#1}{}{^{#1}}}
			{\mathinner{\mathrm{d}\IfNoValueTF{#1}{}{^{#1}}\argopen(#3\argclose)}}
		}
		{\mathinner{\mathrm{d}\IfNoValueTF{#1}{}{^{#1}}#2} \IfNoValueTF{#3}{}{(#3)}}
	}
\newcommand{\dy}{\dd{y}}
\newcommand{\dr}{\dd{r}}
\newcommand{\del}{\partial}
\newcommand{\eps}{\varepsilon}
\newcommand{\im}{\mathrm{i}}
\newcommand{\FL}[1]{(-\Delta)^{#1}}
\newcommand{\F}{\mathcal{F}}
\newcommand{\power}{\mathfrak{s}} %exponent of fractional Laplacian
\newcommand{\X}{\mathcal{X}}
\newcommand{\Y}{\mathcal{Y}}
\DeclareMathOperator{\Res}{Res}
\DeclareMathOperator{\Id}{Id}
\newcommand{\Ord}{\mathcal{O}}
\newcommand{\vcc}{\vcentcolon}
\DeclarePairedDelimiter\abs{\lvert}{\rvert}
\DeclarePairedDelimiter\norm{\Vert}{\rVert}
\theoremstyle{plain}
\newtheorem{theorem}{Theorem}[section]
\newtheorem{lemma}[theorem]{Lemma}
\newtheorem{proposition}[theorem]{Proposition}
\theoremstyle{definition}
\theoremstyle{remark}
\newtheorem{remark}[theorem]{Remark}
\numberwithin{equation}{section}
 \title{The Amplitude Equation for the Space-Fractional Swift-Hohenberg Equation}
\author{Christian Kuehn\thanks{Technical University of Munich, Department of Mathematics, School of Computation Information and Technology, 85748 Garching b.~M\"unchen, Germany, \texttt{ckuehn@ma.tum.de}} { and} Sebastian Throm\thanks{Ume{\aa} University,  Department of Mathematics and Mathematical Statistics, 90187 Ume{\aa},  \texttt{sebastian.throm@umu.se}}}
\date{}
\begin{document}

\maketitle

\begin{abstract}
Non-local reaction-diffusion partial differential equations (PDEs) involving the fractional Laplacian have arisen in a wide variety of applications. One common tool to analyse the dynamics of classical local PDEs near instability is to derive local amplitude/modulation approximations, which provide local normal forms classifying a wide variety of pattern-formation phenomena. In this work, we study amplitude equations for the space-fractional Swift-Hohenberg equation. The Swift-Hohenberg equation is a basic model problem motivated by pattern formation in fluid dynamics and has served as one of the main PDEs to develop general techniques to derive amplitude equations. We prove that there exists near the first bifurcation point an approximation by a (real) Ginzburg-Landau equation. Interestingly, this Ginzburg-Landau equation is a local PDE, which provides a rigorous justification of the physical conjecture that suitably localized unstable modes can out-compete superdiffusion and re-localize a PDE near instability. Our main technical contributions are to provide a suitable function space setting for the approximation problem, and to then bound the residual between the original PDE and its amplitude equation.  
\end{abstract}

\section{Introduction}
\label{sec:intro}

Anomalous diffusion has been studied in a wide variety of contexts recently. One important case is superdiffusion, where the mean square displacement of a process scales superlinearly in time~\cite{MetzlerKlafter}. On a macroscopic level of observables/densities, the resulting operator is the fractional Laplacian $\FL{\power/2}$ for $\power\in(0,2)$. Due to this physical motivation, the mathematical analysis of fractional reaction-diffusion partial differential equations (PDEs) has to be developed~\cite{bucur2016nonlocal,CaffarelliSilvestre,ChenKimSong,MainardiLuchkoPagnini}. Several directions to understand the dynamics of fractional reaction-diffusion equations have been studied, e.g., travelling waves~\cite{AchleitnerKuehn1,CabreRoquejoffre,del-Castillo-NegreteCarrerasLynch}, numerical methods~\cite{AKMR21,BurrageHaleKay,faustmann-melenk-praetorius21}, and stability/spectral theory~\cite{EhstandKuehnSoresina,golovin2008turing,servadei2014spectrum}; for a more detailed recent survey and introduction to the field of dynamics of fractional reaction-diffusion equations we refer to~\cite{Achleitneretal}. In this work, we contribute to the mathematical analysis of fractional reaction-diffusion PDEs and study the (quadratic/cubic) fractional Swift-Hohenberg equation 
\begin{equation}
\label{eq:SH:qcintro}
 \del_{t}u=-(1-\FL{\power/2})^{2}u+pu-a_1 u^2 - a_2 u^3, \qquad u(x,0)=u_0(x),
\end{equation}
for $u=u(x,t)\in\R$, $x\in\R$, $p\in\R$ is a parameter, and $t\in[0,T_{\textnormal{f}}]$ up to a suitable final time $T_{\textnormal{f}}>0$. The classical Swift-Hohenberg equation for $\power=2$, $a_1=0$, and $a_2=1$ has been studied intensively, and it serves as a key model problem to test amplitude/modulation equations that are obtained as normal forms near bifurcation points~\cite{CrossHohenberg,Hoyle,KuehnBook1,ScU17}. Indeed, one easily checks that the classical Swift-Hohenberg equation has a steady state $u_*(x)\equiv 0$ for all $p\in\R$. This trivial branch of solutions $\{(u_*,p)=(0,0)|p\in\R\}$ changes stability from attracting to repelling at the bifurcation point $p_{\textnormal{c}}=0$. In a suitable neighbourhood of $p_{\textnormal{c}}$ of size $\mathcal{O}(\eps^2)$ for a small parameter $\eps>0$ one hopes to find a simplified PDE, analogous to normal form theory for finite-dimensional bifurcations, that approximates the classical Swift-Hohenberg equation. Let us set $p=\eps^2$, then one may prove that the solutions of the classical Swift-Hohenberg equation can be well-approximated using a decomposition 
\begin{equation}
\label{eq:approxGL}
 \psi(x,t)=\eps (A(X,T)\textnormal{e}^{\textnormal{i}x}+\bar{A}(X,T)\textnormal{e}^{-\textnormal{i}x}),\quad T=\eps^2 t,~X=\eps x,
\end{equation}
into a leading-order Fourier mode that has a space-time dependent (complex-valued) amplitude $A=A(X,T)$. This amplitude satisfies an amplitude/modulation equation, which turns out to be the real Ginzburg-Landau equation
\begin{equation}
\label{eq:rGL:classical}
 \del_{t}A=4\partial_x^2A+A-3A|A|^2,
\end{equation}
where one can also provide an approximation order in various norms, e.g., the point-wise estimate $|u(x,t)-\psi(x,t)|\lesssim\eps^2$ holds for all $x\in\R$ on a time domain $t\in(0,T_{\textnormal{f}}/\eps^2]$ if it holds initially at $t=0$. For the fractional Swift-Hohenberg equation~\eqref{eq:SH:qc} it is still relatively straightforward to check by linearising around the trivial branch and using the Fourier transform that the steady state $u_*(x)\equiv 0$ destabilises upon increasing $p$ at $p_{\textnormal{c}}=0$. So we set $p=\eps^2$ and from the viewpoint of analysis it is necessary to ask, whether the fractional Swift-Hohenberg equation
\begin{equation}
\label{eq:SH:qc}
 \del_{t}u=-(1-\FL{\power/2})^{2}u+\eps^2 u-a_1 u^2 - a_2 u^3, \qquad u(x,0)=u_0(x),
\end{equation}
also admits an amplitude/modulation approximation? In this work, we answer this question positively: we compute the resulting amplitude equation of Ginzburg-Landau-type and we prove an approximation result. A precise statement of the approximation result can be found in Theorem~\ref{Thm:main}. Although the fractional Laplacian is a non-local operator, the resulting Ginzburg-Landau equation turns out to be a local PDE, where the exponent of the fractional Laplacian enters as a parameter. Interestingly, for other classes of non-local PDEs with integral convolution operators occurring in the non-linear terms, one also finds local Ginzburg-Landau-type PDEs~\cite{KuehnThrom,MorganDawes}. Hence, these results lend rigorous mathematical support to the physically motivated conjecture that amplitude equations for non-local PDEs should be local PDEs as long as the critical modes provide enough spatial localization of the important dynamics near the bifurcation point.\medskip

The paper is structured as follows: In the remaining part of Section~\ref{sec:intro}, we are going to introduce some background and notation for the fractional Laplacian and we are going to state our main result in Theorem~\ref{Thm:main}, which shows the approximation of the space-fractional Swift-Hohenberg equation via a Ginzburg-Landau PDE locally near the first bifurcation point. We are building our proof upon techniques for deriving amplitude/modulation equations using residual-based error estimates, which we state at the end of Section~\ref{sec:intro}. Then we proceed to introduce the strategy of the proof in Section~\ref{sec:strategy} and provide a suitable semi-group setting for our problem in Bessel potential spaces. Section~\ref{sec:estimates} is devoted to controlling the difference of the non-linearity between the solution to the fractional Swift-Hohenberg equation and its approximation. In Section~\ref{Sec:fract:SW}, as a preparatory step, we collect several properties and estimates for the fractional Swift-Hohenberg operator and the fractional Laplacian which we will finally use in Section~\ref{Sec:residuum} to compute and estimate the residuum of the approximation.

\subsection{Fractional Laplacian and Bessel potential spaces}

We will switch between several equivalent definitions/representations of the fractional Laplacian $(-\Delta)^{\power/2}$ with $\power\in (0,2)$, i.e.\@ in one space dimension we have
\begin{equation}\label{eq:def:frac:Lap}
 \begin{aligned}
  (\FL{\power/2}u)(x)&=c_{\power}P.V.\int_{\R}\frac{u(x)-u(y)}{\abs{x-y}^{1+\power}}\dy\\
  (\FL{\power/2}u)(x)&=\F^{-1}(\abs{\xi}^{\power}(\F u)(\xi)),
 \end{aligned}
\end{equation}
where $c_{\power}$ is a constant depending upon $\power$. Furthermore, $P.V.$ denotes principal value (which we skip in the following for simplicity) while $\F$ refers to the Fourier transform defined via
\begin{equation*}
 \F(u)(\xi)\vcc=\frac{1}{\sqrt{2\pi}}\int_{\R}u(x)\ee^{-\im x \xi}\dd{x}.
\end{equation*}
Alternatively, we might, when it is more convenient, also use the notation $\hat{u}\vcc=\F(u)$ to denote the Fourier transform.

We recall from \cite{ScU17} the following definitions of weighted $L^2$ and fractional Sobolev/Bessel potential spaces respectively. Let $w(x)=(1+\abs{x}^{2})^{1/2}$ be the standard weight function, then we define
\begin{equation*}
 \begin{aligned}
   L_{\theta}^{2}&\vcc=\{u\in L^{2}\;|\; \norm{u}_{L_{\theta}^{2}}\vcc=\norm{uw^{\theta}}_{L^{2}}\}\\
   H^{\theta}&\vcc=\{u\in L^{2}\;|\; \F u\in L_{\theta}^{2}\}.
 \end{aligned}
\end{equation*}
The norm in $H^{\theta}$ is then given by $\norm{u}_{H^{\theta}}=\norm{\F u}_{L_{\theta}^{2}}$.
\begin{remark}\label{Rem:equivalence:Sobolev:norms}
 If $\theta\in\N_0$ it is well-known that $H^{\theta}=\{u\in L^2\mid D^\alpha u\in L^2 \text{ for all } \alpha \text{ with }\abs{\alpha}\leq \theta\}$ where $D^\alpha u$ is the weak/distributional derivative with respect to the multiindex $\alpha$ (see e.g.\@ \cite{Eva10,DPV12}). Moreover, if $\theta\not\in \N_0$ it holds (see e.g.\@ \cite{DPV12})
 \begin{equation*}
  H^{\theta}=\{u\in H^{\lfloor\theta\rfloor}\mid D^{\alpha}u\in H^{\theta-\lfloor \theta\rfloor} \text{ for all } \alpha \text{ with }\abs{\alpha}=\lfloor\theta\rfloor\}.
 \end{equation*}
 In particular if, for $\theta\in(0,1)$, we denote
 \begin{equation*}
  [u]_{H^\theta}\vcc=\biggl(\int_{\R^{n}\times \R^{n}}\frac{\abs{u(x)-u(y)}^{2}}{\abs{x-y}^{n+2\theta}}\dd{x}\dd{y}\biggr)^{1/2}
 \end{equation*}
 then $\norm{u}\vcc=\norm{u}_{H^{\lfloor\theta\rfloor}}+\sum_{\abs{\alpha}=\lfloor\theta\rfloor}[D^{\alpha}u]_{H^{\theta-\lfloor\theta\rfloor}}$ defines an equivalent norm on $H^{\theta}$.
\end{remark}
We also note the following scaling property of the fractional Laplacian:
\begin{equation}\label{eq:frac:Lap:scaling}
 \FL{\power/2}u(a \cdot)=a^{\power}(\FL{\power/2}u)(a\cdot)\qquad \text{for all }\power\in (0,2).
\end{equation}
Moreover, for later use, in analogy to the above notation we also introduce the space of bounded (Hölder) continuous functions by
\begin{equation*}
 C^{\theta}_{b}\vcc=\{u\colon \R^n\to\R^n\mid u\text{ is continuous and } \norm{u}_{C^{\theta}_{b}}<\infty\}.
\end{equation*}
Here, the norm $\norm{\cdot}_{C^{\theta}_{b}}$ is given by
\begin{equation*}
\norm{u}_{C^{\theta}_{b}}\vcc= \begin{cases}
  \sum_{\abs{\alpha}\leq \theta} \norm{D^{\alpha}u}_{L^{\infty}} & \text{if } \theta\in \N_0\\
  \sum_{\abs{\alpha}\leq \lfloor\theta\rfloor} \norm{D^{\alpha}u}_{L^{\infty}}+\sum_{\abs{\alpha}= \lfloor\theta\rfloor}\sup_{x\neq y}\frac{\abs{D^{\alpha}u(x)-D^{\alpha}u(y)}}{\abs{x-y}^{\theta-\lfloor\theta\rfloor}}&\text{if }\theta\not\in\N_0.
 \end{cases}
\end{equation*}

\subsection{Main result}

Let us consider the following approximation $\psi$ to $u$ solving~\eqref{eq:SH:qc}:
\begin{equation}\label{eq:first:approx}
 \eps\psi(x,t)=\eps\Bigl(A(\eps x,\eps^{2}t)\ee^{\im x}+\bar{A}(\eps x,\eps^{2}t)\ee^{-\im x}\Bigr)
\end{equation}
where $A=A(X,T)$ is a solution of the Ginzburg-Landau equation
\begin{equation}\label{eq:GL}
\del_{T}A=\power^{2}\del_{X}^{2}A+A-\Bigl(-\bigl(4+\frac{2}{\power^2+\mathfrak{c}^{+}}\bigr)a_1^2+3a_{2}\Bigr)\abs{A}^{2}A
\end{equation}
with $\mathfrak{c}^{+}$ as defined in \eqref{eq:c:pm} below. We note that the coefficient of the non-linearity is chosen in such a way that several terms of lower order in the residuum vanish as can be seen from the computation in Section~\ref{Sec:residuum}. 

Our main statement in this work is the following result stating that $\psi$ as defined in \eqref{eq:first:approx} approximates solutions to \eqref{eq:SH:qc} on a large time scale for $\eps$ sufficiently small.
\begin{theorem}\label{Thm:main}
 Let $\power\in[1,2)$, $\theta\geq 1$ and let $A\in C^1([0,T_*],H^{\theta+3})$ be a solution to \eqref{eq:GL}. Let $\psi$ be given by \eqref{eq:first:approx}. There exist solutions $u$ to \eqref{eq:SH:qc} such that
 \begin{equation*}
  \norm{u(\cdot, t)-\eps\psi(\cdot,t)}_{H^{\theta}}\lesssim \eps^{3/2} \qquad \text{for all } t\in [0,T_*/\eps^2].
 \end{equation*}
\end{theorem}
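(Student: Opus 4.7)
The plan is to apply the residual-based amplitude-equation scheme in the semigroup framework set up in Section~\ref{sec:strategy}. Write~\eqref{eq:SH:qc} abstractly as $\del_{t}u=\mathcal{L}_{\eps}u+N(u)$ with linear part $\mathcal{L}_{\eps}\vcc=-(1-\FL{\power/2})^{2}+\eps^{2}$ and nonlinearity $N(u)\vcc=-a_{1}u^{2}-a_{2}u^{3}$. The Fourier symbol of $\mathcal{L}_{\eps}$ equals $-(1-\abs{\xi}^{\power})^{2}+\eps^{2}\leq\eps^{2}$, so $\norm{\ee^{t\mathcal{L}_{\eps}}}_{H^{\theta}\to H^{\theta}}\leq\ee^{\eps^{2}t}$ and the semigroup is uniformly bounded on the long time scale $[0,T_{*}/\eps^{2}]$ by a constant depending only on $T_{*}$. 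This is the linear estimate that makes the long-time comparison feasible.

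The leading-order ansatz $\eps\psi$ does not produce a small enough residual by itself, because the quadratic term $-a_{1}(\eps\psi)^{2}$ excites the non-critical modes $\ee^{\pm 2\im x}$ and $\ee^{0}$ at order $\eps^{2}$. I would therefore work with an improved approximation $\eps\psi_{\mathrm{imp}}\vcc=\eps\psi+\eps^{2}\psi_{2}+\eps^{3}\psi_{3}$, where $\psi_{2}$ is a polynomial in $A,\bar{A}$ multiplied by $\ee^{0}$ and $\ee^{\pm 2\im x}$, chosen so as to cancel the quadratic resonance (its coefficients come from inverting $\mathcal{L}_{0}$ on the non-critical modes, which is where $\mathfrak{c}^{\pm}$ from~\eqref{eq:c:pm} enters), and $\psi_{3}$ absorbs the $\ee^{\pm 3\im x}$ cubic resonance together with the derivative-of-$A$ corrections. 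The Ginzburg-Landau equation~\eqref{eq:GL} is then determined uniquely by requiring that the coefficient of $\ee^{\pm\im x}$ in the residual at order $\eps^{3}$ vanishes; the cubic coefficient inherits both the direct contribution $3a_{2}$ and a quadratic-resonance feedback through $\psi_{2}$, which explains the $a_{1}^{2}$ correction in~\eqref{eq:GL}. Using the algebra property of $H^{\theta}$ for $\theta\geq 1$ together with the fractional-Laplacian bounds from Section~\ref{Sec:fract:SW}, Section~\ref{Sec:residuum} then provides $\norm{\mathrm{Res}(\eps\psi_{\mathrm{imp}})}_{H^{\theta}}\lesssim\eps^{7/2}$; the loss of $\eps^{1/2}$ relative to the pointwise order $\eps^{4}$ reflects that the slowly-varying profile $A(\eps\cdot)$ has $L^{2}$-mass scaling like $\eps^{-1/2}$, and this is precisely what fixes the final approximation rate at $\eps^{3/2}$ rather than $\eps^{2}$.

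Setting $R\vcc=\eps^{-3/2}(u-\eps\psi_{\mathrm{imp}})$, the remainder satisfies
\begin{equation*}
 \del_{t}R=\mathcal{L}_{\eps}R+\eps^{-3/2}\bigl[N(\eps\psi_{\mathrm{imp}}+\eps^{3/2}R)-N(\eps\psi_{\mathrm{imp}})\bigr]+\eps^{-3/2}\mathrm{Res}(\eps\psi_{\mathrm{imp}}).
\end{equation*}
I would combine Duhamel's formula with the semigroup bound and the nonlinear difference estimates of Section~\ref{sec:estimates} to set up a Gronwall argument for $\norm{R(t)}_{H^{\theta}}$. Choosing initial data so that $R(0)$ is bounded in $H^{\theta}$ and closing this argument uniformly on $[0,T_{*}/\eps^{2}]$ yields $\norm{u-\eps\psi_{\mathrm{imp}}}_{H^{\theta}}\lesssim\eps^{3/2}$; the triangle inequality together with $\norm{\eps^{2}\psi_{2}+\eps^{3}\psi_{3}}_{H^{\theta}}\lesssim\eps^{3/2}$ then gives the claim.

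The main obstacle I anticipate is exactly this Gronwall step in the presence of the quadratic cross-term $-2a_{1}\eps\psi_{\mathrm{imp}}R$. Since $\eps\psi_{\mathrm{imp}}$ has size $\mathcal{O}(\eps)$ in $C_{b}^{\theta}$, a direct Gronwall application yields a factor $\ee^{C\eps t}$, which is $\ee^{C/\eps}$ at $t=T_{*}/\eps^{2}$---far too large. This is the well-known quadratic-resonance obstruction for Swift-Hohenberg-type justifications, and resolving it requires either a near-identity normal-form transformation that removes the dangerous $\psi_{\mathrm{imp}}R$ interaction modulo spectrally non-resonant remainders, or an additional corrector in $\eps\psi_{\mathrm{imp}}$ that reduces the effective coefficient multiplying $R$ to $\mathcal{O}(\eps^{2})$. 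The Bessel potential setting is essential here because it simultaneously supplies the algebra structure and the proper scaling of $A(\eps\cdot)$, while the dissipativity of $\mathcal{L}_{\eps}$ away from $\abs{\xi}=1$ is what permits such a transformation to be carried out in the non-local, fractional setting; verifying that the construction from Sections~\ref{sec:estimates}--\ref{Sec:residuum} still goes through for $\power\in[1,2)$ with the anomalous dispersion replacing the classical $-\del_{x}^{2}$ is, in my view, the most delicate part of the argument.
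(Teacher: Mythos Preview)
Your diagnosis of the quadratic cross-term $-2a_{1}\eps\Psi R$ as the central obstruction is correct, and you are right that a naive Gronwall with the global semigroup bound $\ee^{\eps^{2}t}$ fails. However, neither of your proposed remedies---a normal-form change of variables or an additional corrector reducing the coefficient to $\mathcal{O}(\eps^{2})$---is what the paper actually does, and you do not carry either one out. The paper resolves the obstruction by the \emph{mode-filter splitting} $R=R_{c}+\eps R_{s}$ with $R_{c}=E_{c}R$ and $R_{s}=\eps^{-1}E_{s}R$, and then runs two coupled Gronwall estimates rather than one. The crucial algebraic fact is that $E_{c}(\Psi_{c}R_{c})=0$: the Fourier support of a product of two critical functions (each supported near $\xi=\pm 1$) lies near $\{0,\pm 2\}$, hence outside $\supp m_{c}$. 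This is exactly the line ``Exploiting the fact that $E_{c}(f_c g_c)=0$'' in Section~\ref{sec:estimates}, and it is what forces the linear-in-$R$ term in the $R_{c}$ equation to be of size $\eps^{2}$, not $\eps$. On the stable side the quadratic term is indeed only $\mathcal{O}(1)$ in $R_{c}$, but there one has the genuinely contractive semigroup bound $\norm{\ee^{t\Lambda}E_{s}}\leq C\ee^{-\sigma_{s}t}$ with $\sigma_{s}>0$ independent of $\eps$, so integrating in Duhamel over $[0,T_{*}/\eps^{2}]$ stays bounded. This two-component mechanism is precisely the content of the abstract Theorem~\ref{Thm:abstract:approximation} with assumptions \ref{It:ass:3}--\ref{It:ass:5}, and the paper's proof consists of verifying those three assumptions.

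Two smaller points. First, the paper's improved approximation stops at order $\eps^{2}$ (your $\psi_{2}$, realised as $A_{0},A_{2}$ in \eqref{eq:improved:approx}); there is no $\eps^{3}\psi_{3}$. Correspondingly the residual is \emph{not} globally $\mathcal{O}(\eps^{7/2})$: one only obtains $\norm{E_{c}\Res}\lesssim\eps^{7/2}$ and $\norm{E_{s}\Res}\lesssim\eps^{5/2}$, matching \eqref{eq:est:Res:abstract} with $\beta=3/2$. The weaker stable bound suffices because of the same exponential decay of $\ee^{t\Lambda}E_{s}$. Second, your final triangle-inequality step is correct in spirit and is the content of Proposition~\ref{Prop:improved:approx}.
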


\subsection{Abstract approximation result}

To prove Theorem~\ref{Thm:main} we will rely on an abstract approximation result from \cite[Section 10.4.1]{ScU17} which we recall here for convenience. Precisely, the abstract equation
\begin{equation*}
 \del_{t}u=\Lambda u +N(u)
\end{equation*}
is considered for which a formal approximation $\eps \Psi$ is given on a time interval $[0,T_*/\eps^2]$ with $T_*>0$. Moreover, we denote by $R=\eps^{-\beta}(u-\eps\Psi)$ with $\beta\geq 3/2$ the corresponding scaled approximation error. Furthermore let $\Res(v)\vcc=-\del_{t}v+\Lambda v+N(v)$ be the \emph{residuum}.

The following abstract approximation result is contained in \cite[Thm. 10.4.3]{ScU17}.

\begin{theorem}\label{Thm:abstract:approximation}
Let $\X$ and $\Y$ be Banach spaces and let \emph{mode filters} $E_{c}$ and $E_{s}$ be given, i.e.\@ bounded linear projections on both $\X$ and $\Y$ (i.e.\@ $E_c^2=E_c$ and $E_s^2=E_s$) extracting the critical and stable modes respectively and such that they commute with the semi-group generated by $\Lambda$, i.e.\@
 \begin{equation*}
  \ee^{t\Lambda}E_{c}=E_{c}\ee^{t\Lambda}\qquad \ee^{t\Lambda}E_{s}=E_{s}\ee^{t\Lambda}.
 \end{equation*}
Moreover, assume the following:
\begin{enumerate}[label=(A\arabic*)]
 \item \label{It:ass:3} There exist constants $C_{\Lambda},\sigma_{c}\geq 0$, $\alpha\in[0,1)$ and $\sigma_{s}>0$ such that
 \begin{equation*}
  \begin{aligned}
   \norm{\ee^{t\Lambda}E_{c}}_{\Y\to\Y}&\leq C_{\Lambda}\ee^{\sigma_{c}\eps^{2}t} &\qquad \norm{\ee^{t\Lambda}E_{c}}_{\X\to\Y}&\leq C_{\Lambda}\ee^{\sigma_{c}\eps^{2}t}\\
   \norm{\ee^{t\Lambda}E_{s}}_{\Y\to\Y}&\leq C_{\Lambda}\ee^{-\sigma_{s}t}&\qquad
   \norm{\ee^{t\Lambda}E_{s}}_{\X\to\Y}&\leq C_{\Lambda}\max\{1,t^{-\alpha}\}\ee^{-\sigma_{s}t}.
  \end{aligned}
 \end{equation*}
  \item \label{It:ass:4} We have the estimates
 \begin{equation*}
  \begin{multlined}
   \norm{\eps^{-\beta}E_{c}(N(\eps \Psi +\eps^{\beta}R)-N(\eps \Psi))}_{\X}\\*
   \leq C_{1,c}\eps^{2}(\norm{R_{c}}_{\Y}+\norm{R_{s}}_{\Y})+C_{2,c}(M_{c},M_{s})\max\{\eps^{3},\eps^{\beta}\}(\norm{R_{c}}_{\Y}+\norm{R_{s}}_{\Y})^{2}\\*
   \shoveleft{\norm{\eps^{-(\beta+1)}E_{s}(N(\eps \Psi +\eps^{\beta}R)-N(\eps \Psi))}_{\X}}\\*
   \leq C_{1,s}\norm{R_{c}}_{\Y}+C_{1,s}\eps\norm{R_{s}}_{\Y}+C_{2,s}(M_{c},M_{s})\max\{\eps,\eps^{\beta-1}\}(\norm{R_{c}}_{\Y}+\norm{R_{s}}_{\Y})^{2}
  \end{multlined}
 \end{equation*}
 as long as $\norm{R_{c}}_{\Y}\leq M_{c}$ and $\norm{R_{s}}_{\Y}\leq M_{s}$ with constants $C_{1,c},C_{1,s}$ and $C_{2,c}$ and $C_{2,s}$ monotonically growing. Here $R=R_c+\eps R_s$ with $R_c$ and $R_s$ solving 
 \begin{equation*}
  \begin{split}
     \partial_t R_c&=\Lambda R_c +\eps^{-\beta} E_c\bigl(N(\eps \Psi+\eps^\beta R)-N(\eps \Psi)\bigr)+\eps^{-\beta}E_c \Res(\eps \Psi)\\
    \partial_t R_s&=\Lambda R_s +\eps^{-\beta-1} E_s\bigl(N(\eps \Psi+\eps^\beta R)-N(\eps \Psi)\bigr)+\eps^{-\beta-1}E_s \Res(\eps \Psi)
  \end{split}
 \end{equation*}
 in the mild sense. Moreover, for later use, let $\Psi_c=E_c(\eps \Psi)$ and $\Psi_s=\eps^{-1}E_s(\eps \Psi)$ be the stable and critical component of $\Psi$.
 \item \label{It:ass:5} The residuum $\Res(v)=-\del_{t}v+\Lambda v+N(v)$ satisfies
 \begin{equation}\label{eq:est:Res:abstract}
  \begin{aligned}
   \sup_{\tau\in[0,T_{0}/\eps^2]}\norm{E_{c}\Res(\eps \Psi(\tau))}_{\Y}\leq C_{res}\eps^{\beta+2}\quad \text{and}\quad \sup_{\tau\in[0,T_{0}/\eps^2]}\norm{E_{s}\Res(\eps \Psi(\tau))}_{\Y}\leq C_{res}\eps^{\beta+1}.
  \end{aligned}
 \end{equation}
\end{enumerate}
Under these assumptions there exist constants $C,\eps_{0}>0$ such that 
 \begin{equation*}
  \sup_{t\in[0,T_{0}/\eps^2]}\norm{u-\eps\Psi}_{\Y}\leq C\eps^{\beta}
 \end{equation*}
 for all $\eps\in(0,\eps_{0})$.
\end{theorem}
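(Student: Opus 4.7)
The plan is to apply the variation-of-constants formula to the mild equations for $R_{c}$ and $R_{s}$ displayed in~\ref{It:ass:4}, bound each resulting integral using the semi-group estimates~\ref{It:ass:3}, the nonlinear estimates~\ref{It:ass:4} and the residual control~\ref{It:ass:5}, and close everything by a bootstrap-plus-Gronwall argument on the long time interval $[0,T_{0}/\eps^{2}]$.

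To set up the bootstrap, I fix $M_{c},M_{s}>0$ (to be chosen $\eps$-independent) and let $T_{\eps}\in(0,T_{0}/\eps^{2}]$ be the maximal time for which $\norm{R_{c}(t)}_{\Y}\leq M_{c}$ and $\norm{R_{s}(t)}_{\Y}\leq M_{s}$ hold on $[0,T_{\eps})$. Under this a priori bound the quadratic contributions in~\ref{It:ass:4} are of size $C_{2,c}(M_{c},M_{s})\max\{\eps^{3},\eps^{\beta}\}(M_{c}+M_{s})^{2}$ and $C_{2,s}(M_{c},M_{s})\max\{\eps,\eps^{\beta-1}\}(M_{c}+M_{s})^{2}$ respectively, both of which tend to $0$ as $\eps\to 0$ since $\beta\geq 3/2$; they act as lower-order perturbations throughout.

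For the critical part, Duhamel combined with $\norm{\ee^{t\Lambda}E_{c}}_{\X\to\Y}\leq C_{\Lambda}\ee^{\sigma_{c}\eps^{2}t}$ and $\norm{\eps^{-\beta}E_{c}\Res(\eps\Psi)}_{\X}\leq C_{res}\eps^{2}$ gives, after using that $\ee^{\sigma_{c}\eps^{2}t}\leq \ee^{\sigma_{c}T_{0}}$ on $[0,T_{0}/\eps^{2}]$ and that the residual contribution integrates against $\int_{0}^{t}\eps^{2}\ds\leq T_{0}$,
\begin{equation*}
\norm{R_{c}(t)}_{\Y}\leq C\norm{R_{c}(0)}_{\Y}+C\int_{0}^{t}\eps^{2}\bigl(\norm{R_{c}(s)}_{\Y}+\norm{R_{s}(s)}_{\Y}\bigr)\ds+C+\mathrm{h.o.t.}
\end{equation*}
For the stable part, the $\X\to\Y$ kernel $\max\{1,(t-s)^{-\alpha}\}\ee^{-\sigma_{s}(t-s)}$ has finite integral over $[0,\infty)$ because $\alpha\in[0,1)$ and $\sigma_{s}>0$, and hence
\begin{equation*}
\norm{R_{s}(t)}_{\Y}\leq C\norm{R_{s}(0)}_{\Y}+C\sup_{s\in[0,t]}\norm{R_{c}(s)}_{\Y}+C\eps\sup_{s\in[0,t]}\norm{R_{s}(s)}_{\Y}+C+\mathrm{h.o.t.}
\end{equation*}
Absorbing the $\eps$-term on the right for $\eps$ small yields $\sup_{[0,T_{\eps}]}\norm{R_{s}}_{\Y}\lesssim 1+\sup_{[0,T_{\eps}]}\norm{R_{c}}_{\Y}$. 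Inserting this into the $R_{c}$-estimate and applying Gronwall on $[0,T_{0}/\eps^{2}]$ (the effective rate is $C\eps^{2}$, so the exponent stays bounded by a constant) produces an $\eps$-independent bound $\sup\norm{R_{c}}_{\Y},\sup\norm{R_{s}}_{\Y}\leq C_{*}$. Choosing $M_{c},M_{s}=2C_{*}$ and $\eps_{0}$ so small that the higher-order terms stay much smaller than $C_{*}$ closes the bootstrap by continuity, forcing $T_{\eps}=T_{0}/\eps^{2}$. The conclusion then follows from $u-\eps\Psi=\eps^{\beta}(R_{c}+\eps R_{s})$ via $\norm{u-\eps\Psi}_{\Y}\leq\eps^{\beta}(M_{c}+\eps M_{s})\lesssim\eps^{\beta}$.

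The main obstacle is the coupling in the $R_{s}$-equation, where $R_{c}$ enters linearly without any prefactor of $\eps$: unlike for $R_{c}$, there is no small parameter available to feed into a Gronwall exponential on the $1/\eps^{2}$ time window. This is precisely why~\ref{It:ass:3} postulates an $\eps$-independent exponential decay together with an integrable Hölder-type singularity in the stable semi-group estimate; only this combination lets $\norm{R_{s}}_{\Y}$ be controlled in $L^{\infty}_{t}$ directly by $\sup\norm{R_{c}}_{\Y}$. Handling this coupling cleanly while keeping the bootstrap constants $C_{2,c}(M_{c},M_{s})$ and $C_{2,s}(M_{c},M_{s})$ from blowing up before $\eps_{0}$ can be chosen is the only delicate step; everything else is a by-the-numbers semi-group estimation.
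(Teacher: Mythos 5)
The paper does not prove this theorem itself: it is quoted verbatim as an abstract approximation result from \cite[Thm.~10.4.3]{ScU17}, so the comparison here is with the standard proof behind that citation. Your strategy --- Duhamel for the coupled $R_c$/$R_s$ system, the semi-group bounds (A1), the nonlinear bounds (A2) and the residual bounds (A3), then bounding $\sup\norm{R_s}_{\Y}$ by $1+\sup\norm{R_c}_{\Y}$ via the integrable kernel $\max\{1,t^{-\alpha}\}\ee^{-\sigma_s t}$, inserting this into the $R_c$ inequality, applying Gronwall with rate $\mathcal{O}(\eps^2)$ on $[0,T_0/\eps^2]$, and closing by a continuity/bootstrap argument --- is exactly that standard argument, and your identification of the $R_c$-into-$R_s$ coupling as the reason for the $\eps$-independent decay in (A1) is correct.

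There is, however, a genuine gap at the one quantitatively delicate point: the quadratic terms in the \emph{critical} equation. You dismiss them because their coefficient $C_{2,c}(M_c,M_s)\max\{\eps^3,\eps^\beta\}$ tends to zero, but on a time interval of length $T_0/\eps^2$ pointwise smallness is not enough. In the Duhamel formula for $R_c$ the semi-group factor is only bounded by $\ee^{\sigma_c T_0}$, so under the bootstrap assumption the integrated quadratic contribution is of size $\max\{\eps^3,\eps^\beta\}\,\eps^{-2}\,(M_c+M_s)^2$, i.e.\ $\eps^{\beta-2}(M_c+M_s)^2$ whenever $\beta\leq 3$; for $\beta\in[3/2,2)$ --- in particular for $\beta=3/2$, the value used in this paper --- this diverges as $\eps\to 0$, so neither the Gronwall exponent nor the inhomogeneity stays bounded and the bootstrap does not close as written. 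To survive the $\eps^{-2}$ time horizon the critical quadratic coefficient must be $\mathcal{O}(\eps^{2})$: this holds if $\beta\geq 2$, or with the stronger bound with coefficient $\min\{\eps^{1+\beta},\eps^{2\beta}\}$ that is actually verified in Section~3 of the paper, and the hypothesis (A2) has to be read/used in that stronger form. Your proof needs to either invoke that stronger estimate explicitly or restrict the range of $\beta$; as stated, the sentence ``they act as lower-order perturbations throughout'' is exactly the step that fails. Two smaller points: the residual is controlled in $\Y$ by (A3), so it should be propagated with the $\Y\to\Y$ semi-group estimates rather than the $\X\to\Y$ ones you quote for it, and the argument tacitly needs $\norm{R_c(0)}_{\Y},\norm{R_s(0)}_{\Y}=\mathcal{O}(1)$ (e.g.\ the choice $u(0)=\eps\Psi(0)$), which is where the ``there exist solutions $u$'' in the conclusion comes from.
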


\section{Strategy and preparation}
\label{sec:strategy}

We want to apply Theorem~\ref{Thm:abstract:approximation} to the fractional Swift-Hohenberg equation~\eqref{eq:SH:qc} for which we have to choose appropriate mode filters and to verify the assumptions \cref{It:ass:3,It:ass:4,It:ass:5}. It seems convenient to work mainly in Fourier variables and thus it appears natural to choose Banach spaces of the form $H^\theta$. In fact, concerning the mode filters and \cref{It:ass:3} we can proceed as \cite[Section 10.4.2]{ScU17}. 

\subsection{The function spaces}

We will use $\X=\Y=H^{\theta}$ with $\theta\geq 0$. Moreover, we will fix the parameter $\beta=3/2$.

\begin{remark}\label{Rem:algebra}
 It is well-known (e.g.\@ \cite[Theorem 5.1]{BeH21}) that for $\theta>n/2$ the space $H^{\theta}(\R^n)$ is a Banach algebra, i.e.\@ for $u,v\in H^\theta(\R^n)$ we have $uv\in H^{\theta}(\R^n)$ and there exists a constant $C>0$ (depending only on $n$ and $\theta$) such that $\norm{uv}_{H^{\theta}}\leq C\norm{u}_{H^\theta}\norm{v}_{H^\theta}$.
\end{remark}

\subsection{The mode filters}\label{Sec:mode:filters}

For simplicity, we choose up to some constants, the same mode filters as in \cite[Section 10.4.2]{ScU17}, i.e.\@ $E_c$ is given through the Fourier symbol $m_{c}=\chi_{B_{\delta}(-1)\cup B_{\delta}(1)}$ where $\delta>0$ is sufficiently small, $\chi$ denotes the characteristic function and $B_{r}(a)$ the ball of radius $r$ around $a$. $E_{s}$ is then defined via the symbol $m_{s}=1-m_c$. Since the fractional Swift-Hohenberg operator is non-local with a non-smooth symbol, we use additionally a localization argument analogously as in \cite{Sch94b}, i.e.\@ we introduce another symbol $m_{0}=\chi_{B_{r_0}(0)}$ with $r_0>$ such that $3r_0<\delta$. The corresponding operator is denoted by $E_{0}$. To simplify the notation in several computations later in Section~\ref{Sec:residuum} we furthermore introduce the operator $E_{0}^{c}\vcc=\Id-E_0$. The continuity of these operators follows immediately (see \cite{ScU17}) and  $E_c$ and $E_s$ commute with the semi-group trivially (provided the latter exists -- see Proposition~\ref{Prop:semi-group} below) since everything is defined via Fourier multipliers.

 \subsection{Semi-group estimates}

  We first note that the fractional Swift-Hohenberg operator generates a semi-group on $H^\theta$.
  \begin{proposition}\label{Prop:semi-group}
  The operator $\Lambda u= -(1-\FL{\power/2})^{2}u+\eps^{2}u$ generates a semi-group on $H^{\theta}$ for all $\theta>0$ which is given via the Fourier symbol $\ee^{(-(1-\abs{\xi}^{\power})^{2}+\eps^{2})t}$.
 \end{proposition}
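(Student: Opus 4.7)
The plan is to construct the semi-group directly as a Fourier multiplier and to verify the $C_0$-semi-group axioms in the Bessel potential space $H^\theta$. Writing $p(\xi)\vcc=-(1-\abs{\xi}^{\power})^2+\eps^2$ for the symbol of $\Lambda$ (which is manifest from the Fourier representation of $\FL{\power/2}$ in~\eqref{eq:def:frac:Lap}), I would define
\[
 S(t)u\vcc=\F^{-1}\bigl(\ee^{tp(\xi)}\,\F u\bigr),\qquad t\geq 0,\ u\in H^\theta,
\]
and show that $\{S(t)\}_{t\geq 0}$ is the desired semi-group. The crucial observation is that $p(\xi)\leq \eps^2$ for every $\xi\in\R$, hence $\abs{\ee^{tp(\xi)}}\leq \ee^{\eps^2 t}$ uniformly in $\xi$, and consequently
\[
 \norm{S(t)u}_{H^\theta}=\norm{\ee^{tp(\cdot)}\F u}_{L_{\theta}^{2}}\leq \ee^{\eps^2 t}\norm{u}_{H^\theta},
\]
since the weight $w^\theta$ on the Fourier side commutes with pointwise multiplication. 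This gives boundedness of each $S(t)$ on $H^\theta$ with exponential growth bound $\eps^2$.

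The algebraic semi-group axioms $S(0)=\Id$ and $S(t+s)=S(t)S(s)$ then follow immediately from the exponential law $\ee^{(t+s)p(\xi)}=\ee^{tp(\xi)}\ee^{sp(\xi)}$ together with Plancherel. Strong continuity is handled by dominated convergence: for fixed $u\in H^\theta$ and $t\in[0,1]$, the integrand in
\[
 \norm{S(t)u-u}_{H^\theta}^2=\int_{\R}\abs{\ee^{tp(\xi)}-1}^{2}\abs{\F u(\xi)}^{2}(1+\abs{\xi}^2)^{\theta}\dxi
\]
converges pointwise to zero and is dominated by $(1+\ee^{\eps^{2}})^{2}\abs{\F u(\xi)}^{2}(1+\abs{\xi}^{2})^{\theta}\in L^{1}(\R)$.

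To identify the infinitesimal generator with $\Lambda$, I would take the natural Fourier-multiplier domain $D\vcc=\{u\in H^\theta\mid p(\cdot)\F u\in L_{\theta}^{2}\}$, which by the trivial bound $\abs{p(\xi)}\lesssim 1+\abs{\xi}^{2\power}$ contains $H^{\theta+2\power}$ and is therefore dense in $H^\theta$. Using the elementary estimate $\abs{\ee^{a}-1-a}\leq \tfrac{1}{2}a^{2}\ee^{\abs{a}}$ with $a=tp(\xi)$, the convergence of $t^{-1}(S(t)u-u)$ to $\Lambda u$ in $H^\theta$ reduces once more to dominated convergence. The only mildly delicate point — and the closest to an obstacle — is that $p(\xi)$ is unbounded below as $\abs{\xi}\to\infty$, so the domination step requires splitting the frequency domain into a bounded part (where the Taylor remainder is uniformly small in $t$) and a high-frequency tail (where $p\leq 0$ yields $\ee^{tp(\xi)}\leq 1$ and the assumption $p\,\F u\in L_{\theta}^{2}$ supplies the integrable dominant).
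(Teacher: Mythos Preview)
Your proposal is correct and follows essentially the same approach as the paper: define the semi-group as the Fourier multiplier $\ee^{tp(\xi)}$, bound it via $\sup_\xi \ee^{tp(\xi)}\leq \ee^{\eps^2 t}$, and deduce the remaining properties from the exponential law on the symbol side. The paper's own proof is considerably terser---it records only the boundedness estimate and dismisses the rest with ``the further semi-group properties follow directly from the exponential formula of the Fourier symbol as usual''---whereas you spell out strong continuity and the identification of the generator via dominated convergence, including the frequency-splitting needed to handle the unboundedness of $p$ from below; this extra detail is sound and does not depart from the paper's route.
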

 \begin{proof}
It is direct to note that 
\begin{equation*}
\norm{\F^{-1}\ee^{(-(1-\abs{\xi}^{\power})^{2}+\eps^{2})t} \F u}_{H^{\theta}}=\norm{\F \F^{-1}\ee^{(-(1-\abs{\xi}^{\power})^{2}+\eps^{2})t} \F u}_{L_{\theta}^{2}}
\leq \sup_{\xi\in\R}\ee^{(-(1-\abs{\xi}^{\power})^{2}+\eps^{2})t}\norm{ \F u}_{L_{\theta}^{2}}
\end{equation*}
so we have a well-defined map on $H^{\theta}$. The further semi-group properties follow directly from the exponential formula of the Fourier symbol as usual.
 \end{proof}

 The next statement verifies assumption \ref{It:ass:3} of Theorem~\ref{Thm:abstract:approximation} while we also note that we will not exploit that the semi-group generated by $\Lambda$ is smoothing.
 \begin{proposition}
 Let $\delta\in(0,1)$ and $\power\in(0,2)$. There exist constants $C_{\Lambda}=C_{\Lambda}(\power,\delta)>0$ and $\sigma_{s}=\sigma_s(\power,\delta)>0$ such that
   \begin{equation*}
   \norm{\ee^{t\Lambda}E_{c}}_{H^{\theta}\to H^{\theta}}\leq C_{\Lambda}\ee^{\eps^{2}t}\qquad \text{and}\qquad 
   \norm{\ee^{t\Lambda}E_{s}}_{H^{\theta}\to H^{\theta}}\leq C_{\Lambda}\ee^{-\sigma_{s}t}.
 \end{equation*}
 \end{proposition}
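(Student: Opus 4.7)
The plan is to reduce everything to a pointwise estimate on the Fourier symbol. Since $\ee^{t\Lambda}$, $E_c$ and $E_s$ are all Fourier multipliers, the composition $\ee^{t\Lambda} E_j$ acts on $\hat u$ by multiplication with the symbol $m_j(\xi)\ee^{(-(1-\abs{\xi}^{\power})^{2}+\eps^{2})t}$, and this multiplication commutes with the weight $w^\theta$ that defines the $H^\theta$ norm in Fourier variables. Hence Plancherel yields, for any $\theta\geq 0$ and $j\in\{c,s\}$, the bound
\begin{equation*}
 \norm{\ee^{t\Lambda}E_{j}u}_{H^{\theta}}\leq \Bigl(\sup_{\xi\in\supp m_{j}}\ee^{(-(1-\abs{\xi}^{\power})^{2}+\eps^{2})t}\Bigr)\norm{u}_{H^{\theta}},
\end{equation*}
so the proposition reduces to a uniform upper bound for this symbol on each of the two supports.

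For the critical projection, on $\supp m_{c}=\overline{B_{\delta}(-1)\cup B_{\delta}(1)}$ the contribution $-(1-\abs{\xi}^{\power})^{2}$ is non-positive, and the bound $\ee^{\eps^{2}t}$ follows immediately; one may take $C_{\Lambda}=1$ (or the operator norm of $E_{c}$ on $H^{\theta}$, if one prefers to keep the projection explicit in the constant). For the stable projection, I would study the real-valued function $g(r)\vcc=(1-r^{\power})^{2}$ on $[0,\infty)$: from $g'(r)=-2\power r^{\power-1}(1-r^{\power})$ one sees that $g$ vanishes only at $r=1$, is strictly decreasing on $[0,1]$ and strictly increasing on $[1,\infty)$. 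For $\delta\in(0,1)$ one has $\supp m_{s}\subseteq \{\abs{\xi}\leq 1-\delta\}\cup \{\abs{\xi}\geq 1+\delta\}$, so the monotonicity gives the uniform lower bound
\begin{equation*}
 (1-\abs{\xi}^{\power})^{2}\geq \kappa_{\delta,\power}\vcc= \min\bigl\{(1-(1-\delta)^{\power})^{2},\,((1+\delta)^{\power}-1)^{2}\bigr\}>0
\end{equation*}
on $\supp m_{s}$. Choosing $\eps$ small enough that $\eps^{2}\leq \kappa_{\delta,\power}/2$ (which is consistent with the amplitude-equation regime $\eps\to 0$), the symbol on $\supp m_{s}$ is bounded by $\ee^{-\sigma_{s}t}$ with $\sigma_{s}\vcc= \kappa_{\delta,\power}/2>0$ depending only on $\delta$ and $\power$.

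There is no genuine obstacle here; the only point where some care is needed is the uniform positivity of the gap $\kappa_{\delta,\power}$ over the \emph{unbounded} set $\supp m_{s}$, which is precisely what the monotonicity of $g$ away from $r=1$ provides (the growth $(1-\abs{\xi}^{\power})^{2}\to\infty$ as $\abs{\xi}\to\infty$ ensures the infimum is attained on the boundary $\abs{\xi}=1\pm\delta$). In particular the argument is insensitive to the precise value of $\power\in(0,2)$ and to $\theta\geq 0$, and no smoothing property of the semi-group is exploited, as the proposition and the remark preceding it anticipate.
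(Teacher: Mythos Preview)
Your proposal is correct and follows essentially the same approach as the paper: both reduce to a pointwise bound on the Fourier symbol, use $-(1-\abs{\xi}^{\power})^{2}\leq 0$ for the critical part, and for the stable part bound the symbol from below on $\supp m_s$ by a positive constant depending only on $\delta,\power$, under a smallness condition on $\eps$. Your monotonicity argument for $g(r)=(1-r^{\power})^{2}$ makes explicit why the infimum is attained at $\abs{\xi}=1\pm\delta$; the paper simply writes down $\sigma_s=\tfrac{1}{2}\bigl(\min\{1,1-\abs{1-\delta}^{\power},\abs{1+\delta}^{\power}-1\}\bigr)^{2}$ (the extra ``$1$'' accounting for $\xi=0$, which your monotonicity shows is redundant) and requires $\eps^{2}\leq\sigma_s$.
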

 \begin{proof}
  The proof is essentially the same as in \cite{ScU17} but since it is short we recall it here for convenience. 
  \begin{multline*}
   \norm{\ee^{t\Lambda}E_{c}u}_{H^{\theta}}=\norm{\ee^{(-(1-\abs{\xi}^{\power})^{2}+\eps^{2})t}m_{c}(\xi)\widehat{u}(\xi)}_{L_{\theta}^{2}}\\*
   \leq \sup_{\xi\in\R}(\ee^{(-(1-\abs{\xi}^{\power})^{2}+\eps^{2})t}m_{c}(\xi))\norm{u}_{H^{\theta}}\leq \ee^{\eps^{2}t}\norm{u}_{H^{\theta}}.
  \end{multline*}
  To prove the second estimate we recall the definition of $m_s$ and define $\sigma_{s}(\power,\delta)\vcc=\frac{1}{2}(\min\{1,1-\abs{1-\delta}^\power,\abs{1+\delta}^\power-1\})^2>0$.  We then have
    \begin{multline*}
   \norm{\ee^{t\Lambda}E_{s}u}_{H^{\theta}}=\norm{\ee^{(-(1-\abs{\xi}^{\power})^{2}+\eps^{2})t}m_{s}(\xi)\widehat{u}(\xi)}_{L_{\theta}^{2}}\\*
   \leq \sup_{\xi\in\R}(\ee^{(-(1-\abs{\xi}^{\power})^{2}+\eps^{2})t}m_{s}(\xi))\norm{u}_{H^{\theta}}\leq \ee^{-\sigma_{s}(\power,\delta) t}\norm{u}_{H^{\theta}}
  \end{multline*}
  provided that $\eps^2\leq \sigma_{s}(\power,\delta)$.
 \end{proof}
 
 \subsection{The approximation}
 It turns out that instead of working directly with the approximation \eqref{eq:first:approx} it is more convenient to consider an improved approximation $\Psi$ involving higher order expressions (see also e.g.\@ \cite{ColletEckmann1,vanHarten,KSM92,Sch94b,ScU17}). More precisely, we will use
\begin{multline}\label{eq:improved:approx}
 \eps\Psi(x,t)=\eps\Bigl((E_{0}A)(\eps x,\eps^{2}t)\ee^{\im x}+(E_{0}\bar{A})(\eps x,\eps^{2}t)\ee^{-\im x}\Bigr)\\*
 +\eps^{2}\Bigl((E_{0}A_2)(\eps x,\eps^{2}t)\ee^{2\im x}+(E_{0}\bar{A}_2)(\eps x,\eps^{2}t)\ee^{-2\im x}+(E_{0}A_0)(\eps x,\eps^{2}t)\Bigr).
\end{multline}
Here $A_2$ and $A_0$ are given in terms of $A$ by
\begin{equation}\label{eq:choice:of:parameters}
   A_0=-2a_{1} \abs{A}^{2}\qquad \text{and}\qquad A_2=-\frac{a_{1}}{\power^{2}+\mathfrak{c}^{+}} A^2
\end{equation}
while
\begin{equation}\label{eq:c:pm}
 \mathfrak{c}^{\pm}=\int_{\pm 1}^{\pm 2}\bigl(3\del_{r}\abs{r}^{\power}\del_{r}^{2}\abs{r}^{\power}-(1-\abs{r}^{\power})\del_{r}^{3}\abs{r}^{\power}\bigr)(\pm 2-r)^{2}\dr.
\end{equation}
We note that by symmetry $\mathfrak{c}^{+}=\mathfrak{c}^{-}$. We also recall that $A$ solves \eqref{eq:GL}, i.e.\@
\begin{equation*}
\del_{T}A=\power^{2}\del_{X}^{2}A+A-\Bigl(-\bigl(4+\frac{2}{\power^2+\mathfrak{c}^{+}}\bigr)a_1^2+3a_{2}\Bigl)\abs{A}^{2}A. 
\end{equation*}
The choice of parameters is made to cancel several lower order terms in the residuum which will become clear later in Section~\ref{Sec:residuum}. We also emphasise that due to Lemma~\ref{Lem:cplus} we have $\power^2+\mathfrak{c}^{\pm}\neq 0$ for $\power>0$.
The following statement justifies that we can work with the improved approximation~\eqref{eq:improved:approx} instead of \eqref{eq:first:approx} (see also \cite{ScU17})
\begin{proposition}\label{Prop:improved:approx}
 Let $\theta\geq 1$. For $\psi$ as in \eqref{eq:first:approx} and $\Psi$ as in \eqref{eq:improved:approx} we have
\begin{equation*}
    \sup_{t\in[0,T_{0}/\eps^2]}\norm{\eps\psi-\eps\Psi}_{H^{\theta}}\leq C_{r_0,\power,a_1}\bigl(\norm{A}_{H^{\theta}}+\norm{A}_{H^{\theta}}^2\bigr)\eps^{3/2}.
\end{equation*}
\end{proposition}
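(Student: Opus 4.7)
The plan is to split
\begin{equation*}
 \eps\psi - \eps\Psi = \eps\bigl((A - E_0 A)(\eps x, \eps^2 t)\ee^{\im x} + \cc\bigr) - \eps^2\bigl((E_0 A_2)(\eps x, \eps^2 t)\ee^{2\im x} + \cc + (E_0 A_0)(\eps x, \eps^2 t)\bigr)
\end{equation*}
into the $\mathcal O(\eps)$ packet containing $A-E_0 A$ and the $\mathcal O(\eps^2)$ packet coming from the higher-harmonic corrections, and to estimate each one by a Plancherel computation. Throughout, $(E_0 B)(\eps x,\eps^2 t)$ is understood as $E_0$ (the Fourier multiplier in $x$ with symbol $m_0=\chi_{B_{r_0}(0)}$) applied to the rescaled function $y\mapsto B(\eps y,\eps^2 t)$, so that after the substitution $K=\xi/\eps$ (shifted by $\pm j$ in the $\ee^{\pm \im j x}$ cases) the cutoff becomes $\abs{K}<r_0/\eps$.

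The $\mathcal O(\eps^2)$ packet is the easier one: since $E_0$ restricts to Fourier support in $\abs{\xi}<r_0$, the Bessel weight is bounded by $(1+r_0^2)^\theta$, and the substitution $K=\xi/\eps$ converts the $H^\theta$-integral into $\eps^{-1}\norm{A_0}_{L^2}^2$ up to a constant; after multiplication by $\eps^2$ this produces an $\eps^{3/2}$ estimate controlled by $\norm{A_0}_{L^2}$, with an analogous argument handling the shifted $A_2$-terms. The Banach-algebra property of $H^\theta$ for $\theta>1/2$ (Remark~\ref{Rem:algebra}) then gives $\norm{A_0}_{L^2},\norm{A_2}_{L^2}\leq C_{a_1,\power}\norm{A}_{H^\theta}^2$, which produces the $\norm{A}_{H^\theta}^2$ contribution in the claim.

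The $\mathcal O(\eps)$ packet is the main step. After substituting $K=(\xi-1)/\eps$, the squared $H^\theta$-norm of $(A-E_0 A)(\eps\cdot,\eps^2 t)\ee^{\im\cdot}$ becomes
\begin{equation*}
 \eps^{-1}\int_{\abs{K}>r_0/\eps}\bigl(1+\abs{1+\eps K}^2\bigr)^\theta\abs{\hat A(K,\eps^2 t)}^2\dd{K}.
\end{equation*}
An elementary bound $(1+\abs{1+\eps K}^2)^\theta\leq C_\theta(1+\eps^{2\theta}\abs{K}^{2\theta})$ valid for $\eps\in(0,1]$, combined with the region constraint $\abs{K}>r_0/\eps$ used in the form $1\leq(\eps\abs{K}/r_0)^{2\theta}$, lets both pieces be absorbed into $C_{\theta,r_0}\eps^{2\theta}\norm{A}_{H^\theta}^2$. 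Thus the integral above is bounded by $C_{\theta,r_0}\eps^{2\theta-1}\norm{A}_{H^\theta}^2$; taking the square root and including the $\eps$ prefactor gives $C_{\theta,r_0}\eps^{\theta+1/2}\norm{A}_{H^\theta}\leq C\eps^{3/2}\norm{A}_{H^\theta}$ once $\theta\geq 1$. The conjugate term is identical, and summing all contributions produces the claimed estimate.

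The main obstacle is precisely this last step. A naive bound $\norm{A-E_0 A}_{H^\theta}\leq\norm{A}_{H^\theta}$ together with the $\eps^{-1/2}$-blow-up from the spatial rescaling $y\mapsto\eps y$ only yields $\eps^{1/2}$, and the additional $\eps^\theta$ needed to reach $\eps^{3/2}$ comes entirely from recognizing that, in the macroscopic Fourier variable $K$, the complement $A-E_0 A$ of the rescaled function is supported at frequencies $\abs{K}>r_0/\eps\gg 1$, where the Sobolev regularity of $A$ forces quantitative decay which can be exchanged for powers of $\eps$ via $\int_{\abs{K}>R}\abs{\hat A}^2\dd K\leq R^{-2\theta}\norm{A}_{H^\theta}^2$.
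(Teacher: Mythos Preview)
Your argument is correct and is essentially the paper's proof unpacked: the paper cites Lemma~\ref{Lem:E0c} for the $E_0^c A$ terms and Lemma~\ref{Lem:scaled:Sob:norm} together with the Banach-algebra property for the $\eps^2$ corrections, and your Plancherel computations reproduce precisely the proofs of those lemmas specialised to this situation, yielding the same bounds $C_{r_0}\eps^{\theta+1/2}\norm{A}_{H^\theta}$ and $C_{r_0,\power,a_1}\eps^{3/2}\norm{A}_{H^\theta}^2$. The only cosmetic differences are that you obtain $\norm{A_0}_{L^2},\norm{A_2}_{L^2}$ where the paper gets $\norm{A_0}_{H^\theta},\norm{A_2}_{H^\theta}$ (both are then controlled by $\norm{A}_{H^\theta}^2$ via Remark~\ref{Rem:algebra}), and that the sign in front of the $\eps^2$ packet is immaterial for the norm.
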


\begin{proof}
 We note that 
 \begin{multline*}
     \eps\psi-\eps\Psi=\eps\Bigl((E_{0}^cA)(\eps x,\eps^{2}t)\ee^{\im x}+(E_{0}^c\bar{A})(\eps x,\eps^{2}t)\ee^{-\im x}\Bigr)\\*
 +\eps^{2}\Bigl((E_{0}A_2)(\eps x,\eps^{2}t)\ee^{2\im x}+(E_{0}\bar{A}_2)(\eps x,\eps^{2}t)\ee^{-2\im x}+(E_{0}A_0)(\eps x,\eps^{2}t)\Bigr).
 \end{multline*}
 Thus, by means of \cref{Lem:E0c,Rem:algebra,Lem:scaled:Sob:norm,eq:choice:of:parameters} we deduce
\begin{equation*}
  \sup_{t\in[0,T_{0}/\eps^2]}\norm{\eps\psi-\eps\Psi}_{H^{\theta}}\leq C_{r_0}\eps^{\theta+1/2}\norm{A}_{H^{\theta}}+C_{r_0,\power,a_1}\eps^{3/2}\norm{A}_{H^\theta}^2
\end{equation*}
from which the claim follows due to $\theta\geq 1$.
\end{proof}

\section{Estimating the non-linearities}
\label{sec:estimates}

In this section we estimate the non-linearities to verify Assumption~\ref{It:ass:4}. For this, we will split the approximation as well as the error in critical and stable parts following \cite{ScU17}. However as also pointed out there, the critical and stable part of the approximation, i.e.\@ $\Psi_c$ and $\Psi_s$ satisfy $\norm{\Psi_c}_{H^\theta},\norm{\Psi_s}_{H^{\theta}}=\Ord(\eps^{-1/2})$ which requires to estimate these expressions by Hölder norms. To do so, we will use the following two results.
\begin{proposition}\label{Prop:product:sobolev:hoelder}
 Let $\theta>0$ and $f\in H^{\theta}$.
 \begin{enumerate}[label=\roman*)]
  \item If $\theta\in \N_0$ then $\norm{fg}_{H^{\theta}}\lesssim \norm{f}_{H^{\theta}}\norm{g}_{C^{\theta}_b}$ for all $g\in C^{\theta}_{b}(\R^{n})$.
  \item If $\theta\not\in \N_{0}$ then $\norm{fg}_{H^{\theta}}\leq C_{\nu}\norm{f}_{H^{\theta}}\norm{g}_{C^{\theta+\nu}_b}$ for all $g\in C^{\theta+\nu}_{b}(\R^n)$ and all $0<\nu$.
 \end{enumerate}
\end{proposition}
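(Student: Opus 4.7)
The plan is to handle the two cases separately, with the integer case relying on a direct Leibniz calculation and the non-integer case being reduced to a fractional seminorm estimate by means of the equivalent norm from Remark~\ref{Rem:equivalence:Sobolev:norms}.

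\emph{Integer $\theta$.} First I would invoke the characterisation $H^\theta=\{u\in L^2\mid D^\alpha u\in L^2,\ \abs{\alpha}\le\theta\}$. Expanding $D^\alpha(fg)$ by the Leibniz rule produces a finite sum of terms $D^\beta f\cdot D^{\alpha-\beta}g$ with $\abs{\beta}\le\abs{\alpha}\le\theta$: each factor $D^\beta f$ lies in $L^2$ with norm controlled by $\norm{f}_{H^\theta}$, and each factor $D^{\alpha-\beta}g$ lies in $L^\infty$ with norm controlled by $\norm{g}_{C^\theta_b}$. A single application of Hölder's inequality $L^2\cdot L^\infty\hookrightarrow L^2$ and summation over $\abs{\alpha}\le\theta$ complete the case.

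\emph{Non-integer $\theta$.} Writing $\theta=k+s$ with $k=\lfloor\theta\rfloor$ and $s\in(0,1)$, Remark~\ref{Rem:equivalence:Sobolev:norms} provides the equivalent norm $\norm{u}_{H^k}+\sum_{\abs{\alpha}=k}[D^\alpha u]_{H^s}$. The $\norm{fg}_{H^k}$ contribution is handled by case (i) combined with the embedding $C^{\theta+\nu}_b\hookrightarrow C^k_b$. For the seminorm pieces I would apply Leibniz again to $D^\alpha(fg)$ with $\abs{\alpha}=k$ and reduce to estimating $[uv]_{H^s}$ for $u=D^\beta f$ and $v=D^{\alpha-\beta}g$. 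The core bilinear estimate I would establish is
\[
[uv]_{H^s}^2\lesssim \norm{v}_{L^\infty}^2\,[u]_{H^s}^2+\norm{u}_{L^2}^2\,\sup_{y\in\R^n}\int_{\R^n}\frac{\abs{v(x)-v(y)}^2}{\abs{x-y}^{n+2s}}\dx,
\]
obtained from the pointwise decomposition $u(x)v(x)-u(y)v(y)=(u(x)-u(y))v(x)+u(y)(v(x)-v(y))$. To control the remaining inner integral I would split it at $\abs{x-y}=1$: on $\{\abs{x-y}\le 1\}$ the Hölder bound $\abs{v(x)-v(y)}\le [v]_{C^{s+\nu}_b}\abs{x-y}^{s+\nu}$ turns the integrand into something comparable to $\abs{x-y}^{2\nu-n}$, which is integrable near the diagonal precisely because $\nu>0$; on $\{\abs{x-y}>1\}$ the bound $\abs{v(x)-v(y)}\le 2\norm{v}_{L^\infty}$ leaves $\abs{x-y}^{-n-2s}$, integrable at infinity thanks to $s>0$. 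This yields $[uv]_{H^s}\lesssim \norm{u}_{H^s}\norm{v}_{C^{s+\nu}_b}$. Plugging $u=D^\beta f$, $v=D^{\alpha-\beta}g$ back in and using the Fourier-side bounds $\norm{D^\beta f}_{H^s}\le\norm{f}_{H^{\abs{\beta}+s}}\le\norm{f}_{H^\theta}$ and $\norm{D^{\alpha-\beta}g}_{C^{s+\nu}_b}\le\norm{g}_{C^{\theta+\nu}_b}$, followed by summation over $\beta\le\alpha$ and $\abs{\alpha}=k$, finishes the proof.

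\emph{Main obstacle.} The only delicate point is the fractional seminorm estimate above: mere $C^s$ regularity of $v$ is insufficient to integrate $\abs{v(x)-v(y)}^2/\abs{x-y}^{n+2s}$ near the diagonal, and this is exactly the reason one must upgrade the hypothesis from $g\in C^\theta_b$ to $g\in C^{\theta+\nu}_b$ with arbitrary $\nu>0$ in the non-integer case. All other steps (Leibniz expansion, Hölder, reduction via the equivalent norm) are routine once this bilinear inequality is in place.
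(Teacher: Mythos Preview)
Your proposal is correct and follows essentially the same route as the paper: the integer case via Leibniz plus $L^2\cdot L^\infty$, and the non-integer case via the equivalent norm of Remark~\ref{Rem:equivalence:Sobolev:norms}, the pointwise splitting $u(x)v(x)-u(y)v(y)=(u(x)-u(y))v(x)+u(y)(v(x)-v(y))$, and the near/far decomposition at $\abs{x-y}=1$. The paper is slightly terser in that it reduces outright to $\theta\in(0,1)$ and also notes (as you should) that one may assume $s+\nu<1$ so that the H\"older increment bound $\abs{v(x)-v(y)}\le[v]_{C^{s+\nu}_b}\abs{x-y}^{s+\nu}$ is valid, the case of larger $\nu$ then following from the embedding $C^{\theta+\nu}_b\hookrightarrow C^{\theta+\nu_0}_b$.
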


\begin{proof}
 The first part follows immediately from the chain rule recalling also Remark~\ref{Rem:equivalence:Sobolev:norms}.
 
 To prove the second part, by means of the first part, it suffices to consider $\theta\in(0,1)$. Moreover, we can restrict to $0<\nu<1+\lfloor\theta\rfloor-\theta$. In this case, recalling Remark~\ref{Rem:equivalence:Sobolev:norms} it suffices to estimate $[fg]_{H^{\theta}}$ and we have
 \begin{multline*}
  [fg]_{H^{\theta}}=\biggl(\int_{\R^n\times\R^n}\frac{\abs{f(x)g(x)-f(y)g(y)}^2}{\abs{x-y}^{n+2\theta}}\dd{x}\dd{y}\biggr)^{1/2}\\*
  =\biggl(\int_{\R^n\times\R^n}\frac{\abs{(f(x)-f(y))g(x)+f(y)(g(x)-g(y))}^2}{\abs{x-y}^{n+2\theta}}\dd{x}\dd{y}\biggr)^{1/2}\\*
  \leq \sqrt{2}\biggl(\int_{\R^n\times\R^n}\frac{\abs{(f(x)-f(y)}^2}{\abs{x-y}^{n+2\theta}}\abs{g(x)}^2\dd{x}\dd{y}+\int_{\R^n\times\R^n}\frac{\abs{g(x)-g(y)}^2}{\abs{x-y}^{n+2\theta}}\abs{f(y)}^{2}\dd{x}\dd{y}\biggr)^{1/2}\\*
  \leq \sqrt{2}\biggl([f]_{H^{\theta}}\norm{g}_{C^0_b}^2+4\norm{g}_{C^{\theta+\nu}_b}^2\int_{\R^n}\abs{f(y)}^{2}\int_{\R^{n}}\frac{\min\{1,\abs{x-y}^{2\theta+2\nu}\}}{\abs{x-y}^{n+2\theta}}\dd{x}\dd{y}\biggr)^{1/2}\\*
  \leq \sqrt{2}\Bigl([f]_{H^{\theta}}\norm{g}_{C^0_b}^2+C_{\nu,\theta}\norm{f}_{L^2}^2\norm{g}_{C^{\theta+\nu}_b}^2\Bigr)^{1/2}.
 \end{multline*}
\end{proof}
The following result concerns a special case of Sobolev's Embedding Theorem ensuring that $\norm{\Psi_c}_{C^\theta_b},\norm{\Psi_s}_{C^{\theta}_b}=\Ord(1)$. It is a slight modification of \cite[Lemma~10.4.4]{ScU17} to where we refer for the corresponding proof.
\begin{lemma}\label{Lem:Psi:Hoelder}
 Let $A\in C([0,T_*],H^{\theta_A})$ be a solution to \eqref{eq:GL} with $\theta_A>\theta+\frac{1}{2}$. Let $\Psi$ be given by \eqref{eq:improved:approx} and $\Psi_c=E_c(\Psi)$ as well as $\Psi_s=E_s(\Psi)$. For sufficiently small $\eps>0$ we have
\begin{equation*}
 \sup_{t\in[0,T_*/\eps^2]}\Bigl(\norm{\Psi_c}_{C^{\theta}_b}+\norm{\Psi_s}_{C^{\theta}_b}\Bigr)\lesssim 1.
\end{equation*}
\end{lemma}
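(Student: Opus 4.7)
The plan is to exploit the tight Fourier localisation of the building blocks of $\Psi$ in \eqref{eq:improved:approx} to identify $\Psi_c$ and $\Psi_s$ explicitly, and then to control each piece via the one-dimensional Sobolev embedding $H^{\theta_A}(\R)\hookrightarrow C^{\theta}_{b}(\R)$, which holds precisely because $\theta_A>\theta+\tfrac{1}{2}$.

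First I would observe that for $g\in L^{2}(\R)$ and $k\in\R$ the function $x\mapsto g(\eps x)\ee^{\im k x}$ has Fourier transform supported in $k+\eps\cdot\supp\widehat{g}$. Taking $g\in\{E_{0}A,E_{0}A_{2},E_{0}A_{0}\}$, whose transforms are supported in $B_{r_{0}}(0)$, yields supports in $B_{\eps r_{0}}(\pm 1)$, $B_{\eps r_{0}}(\pm 2)$, and $B_{\eps r_{0}}(0)$, respectively. Since $3r_{0}<\delta<1$, for $\eps$ sufficiently small the first kind lies inside $B_{\delta}(-1)\cup B_{\delta}(1)$ while the others stay outside. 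Hence $m_{c}$ acts as the identity on the $\ee^{\pm\im x}$-terms of $\eps\Psi$ and kills the rest, while $m_{s}=1-m_{c}$ does the opposite; this gives the clean splittings
\begin{equation*}
\Psi_{c}(x,t)=(E_{0}A)(\eps x,\eps^{2}t)\ee^{\im x}+(E_{0}\bar{A})(\eps x,\eps^{2}t)\ee^{-\im x},
\end{equation*}
\begin{equation*}
\Psi_{s}(x,t)=(E_{0}A_{2})(\eps x,\eps^{2}t)\ee^{2\im x}+(E_{0}\bar{A}_{2})(\eps x,\eps^{2}t)\ee^{-2\im x}+(E_{0}A_{0})(\eps x,\eps^{2}t).
\end{equation*}

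Next, for $g\in C^{\theta}_{b}(\R)$ and $k\in\R$, the Leibniz rule applied to integer-order derivatives together with the standard scaling $[g(\eps\cdot)]_{C^{\theta-\lfloor\theta\rfloor}_b}=\eps^{\theta-\lfloor\theta\rfloor}[g]_{C^{\theta-\lfloor\theta\rfloor}_b}$ for the fractional part yields the uniform bound $\norm{g(\eps\cdot)\ee^{\im k\cdot}}_{C^{\theta}_{b}}\leq C_{\theta}(1+\abs{k})^{\theta}\norm{g}_{C^{\theta}_{b}}$ for $\eps\in(0,1]$. Applying this with $k\in\{0,\pm 1,\pm 2\}$ reduces the claim to time-uniform bounds for $\norm{E_{0}A}_{C^{\theta}_{b}}$, $\norm{E_{0}A_{2}}_{C^{\theta}_{b}}$, $\norm{E_{0}A_{0}}_{C^{\theta}_{b}}$. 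Since $E_{0}$ is bounded on every $H^{s}(\R)$ as an $L^{\infty}$-Fourier multiplier and the Sobolev embedding $H^{\theta_{A}}(\R)\hookrightarrow C^{\theta}_{b}(\R)$ is available, we obtain $\norm{E_{0}A}_{C^{\theta}_{b}}\lesssim\norm{A}_{H^{\theta_{A}}}$, and similarly for $A_{2}$ and $A_{0}$. The algebra property from Remark~\ref{Rem:algebra} (applicable since $\theta_{A}>1/2$) controls the quadratic quantities defined in \eqref{eq:choice:of:parameters} via $\norm{A_{2}}_{H^{\theta_{A}}}+\norm{A_{0}}_{H^{\theta_{A}}}\lesssim\norm{A}_{H^{\theta_{A}}}^{2}$. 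Combining everything with $\sup_{T\in[0,T_{*}]}\norm{A(\cdot,T)}_{H^{\theta_{A}}}<\infty$, guaranteed by $A\in C([0,T_{*}],H^{\theta_{A}})$, produces the desired inequality.

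The only genuinely non-routine step is the Fourier-support separation at the start; this is what forces the restriction to small $\eps$ and is the reason the localiser $E_{0}$ was inserted into the improved approximation in the first place. After that identification, all remaining estimates are standard harmonic analysis in the $H^{s}$/$C^{\theta}_{b}$ scale.
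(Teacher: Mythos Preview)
Your proof is correct and follows exactly the route the paper indicates: the paper gives no argument of its own, describing the lemma as ``a special case of Sobolev's Embedding Theorem'' and referring to \cite[Lemma~10.4.4]{ScU17} for the details, which is precisely the Fourier-support identification plus $H^{\theta_A}\hookrightarrow C^{\theta}_b$ embedding you carry out. One minor bookkeeping remark: with the lemma's literal definition $\Psi_s=E_s(\Psi)$ your displayed formula for $\Psi_s$ is missing the prefactor $\eps$ coming from \eqref{eq:improved:approx}; however, the nonlinearity estimates in Section~\ref{sec:estimates} actually use the normalisation $\eps\Psi=\eps\Psi_c+\eps^2\Psi_s$ (so $\Psi_s=\eps^{-1}E_s(\Psi)$), which agrees with what you wrote, and in either convention your bound covers the larger quantity.
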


After this preparation, we will now estimate the non-linearities to justify \ref{It:ass:4}. For the quadratic terms we obtain
\begin{multline*}
 (\eps \Psi +\eps^{\beta}R)^{2}-(\eps\Psi)^{2}\\*
 \shoveleft{=(\eps \Psi_c+\eps^2 \Psi_s +\eps^{\beta} R_c +\eps^{\beta+1} R_{s})^{2}-(\eps \Psi_c +\eps^2 \Psi_s)^{2}}\\*
 \shoveleft{=\eps^{2\beta}R_{c}^{2}+\eps^{2(\beta+1)}R_{s}^{2}+2\Bigl(\eps^{1+\beta}\Psi_{c}R_{c}+\eps^{\beta+2}\Psi_{c}R_{s}+\eps^{2+\beta}\Psi_{s}R_{c}+\eps^{3+\beta}\Psi_{s}R_{s}+\eps^{2\beta+1}R_{c}R_{s}}\Bigr).
\end{multline*}
Exploiting the fact that $E_{c}(f_c g_c)=0$ and Proposition~\ref{Prop:product:sobolev:hoelder} we obtain together with Remark~\ref{Rem:algebra} that
\begin{multline}\label{non-lin:1}
 \eps^{-\beta}\norm*{E_{c}\bigl((\eps \Psi +\eps^{\beta}R)^{2}-(\eps\Psi)^{2}\bigr)}_{\X}\\*
 \lesssim \eps^{2}\norm{\Psi_{c}}_{C_{b}^{\theta}}\norm{R_{s}}_{\Y}+\eps^{2}\norm{\Psi_{s}}_{C_{b}^{\theta}}\norm{R_{c}}_{\Y}+\eps^{3}\norm{\Psi_{s}}_{C_{b}^{\theta}}\norm{R_{s}}_{\Y}+\eps^{2+\beta}\norm{R_{s}}_{\Y}^{2}+\eps^{\beta+1}\norm{R_{c}}_{\Y}\norm{R_{s}}_{\Y}\\*
\lesssim \eps^{2}\bigl(\norm{\Psi_{c}}_{C_{b}^{\theta}}+\norm{\Psi_{s}}_{C_{b}^{\theta}}\bigr)\bigl(\norm{R_{s}}_{\Y}+\norm{R_{c}}_{\Y}\bigr)+\eps^{1+\beta}\bigl(\norm{R_{s}}_{\Y}+\norm{R_{c}}_{\Y}\bigr)^2
\end{multline}
as well as
\begin{multline}\label{non-lin:2}
 \eps^{-\beta-1}\norm*{E_{s}\bigl((\eps\Psi+\eps^{\beta}R)^{2}-(\eps\Psi)^{2}\bigr)}_{\X}\\*
 \shoveleft{\lesssim \bigl(\norm{\Psi_c}_{C_{b}^{\theta}}+\eps\norm{\Psi_{s}}_{C_{b}^{\theta}}\bigr)\norm{R_{c}}_{\Y}+\bigl(\eps\norm{\Psi_{c}}_{C_{b}^{\theta}}+\eps^{2}\norm{\Psi_{s}}_{C_{b}^{\theta}}\bigr)\norm{R_{s}}_{\Y}}\\*
 \shoveright{+\eps^{\beta-1}\norm{R_{c}}_{\Y}^{2}+\eps^{\beta-1}\norm{R_{s}}_{\Y}^{2}+\eps^{\beta}\norm{R_{c}}_{\Y}\norm{R_{s}}_{\Y}}\\*
\lesssim \bigl(\norm{\Psi_c}_{C_{b}^{\theta}}+\eps\norm{\Psi_{s}}_{C_{b}^{\theta}}\bigr)\bigl(\norm{R_{c}}_{\Y}+\eps\norm{R_{s}}_{\Y}\bigr)+\eps^{\beta-1}\bigl(\norm{R_{c}}_{\Y}+\norm{R_{s}}_{\Y}\bigr)^{2}.
\end{multline}
In the same way, we can proceed for the cubic terms to get
\begin{multline*}
 (\eps\Psi+\eps^{\beta}R)^{3}-(\eps\Psi)^{3}=(\eps\Psi_c+\eps^{2}\Psi_s+\eps^{\beta}R_{c}+\eps^{\beta+1}R_{s})^{3}-(\eps\Psi_{c}+\eps^{2}\Psi_{s})^{3}\\*
 =3\Bigl(\eps^{2+\beta}\Psi_{c}^{2}R_{c}+\eps^{3+\beta}\Psi_{c}^{2}R_{s}+\eps{4+\beta}\Psi_{s}^{2}R_{c}+\eps^{5+\beta}\Psi_{s}^{2}R_{s}+\eps^{1+2\beta}\Psi_{c}R_{c}^{2}+\eps^{2+2\beta}\Psi_{s}R_{c}^{2}\\*
 +\eps^{1+3\beta}R_{c}^{2}R_{s}+\eps^{3+2\beta}\Psi_{c}R_{s}^{2}+\eps^{4+2\beta}\Psi_{s}R_{s}^{2}+\eps^{2+3\beta}R_{c}R_{s}^{2}\Bigr)+\eps^{3\beta}R_{c}^{3}+\eps^{3\beta+3}R_{s}^{3}.
\end{multline*}
Proceeding as above we get
\begin{multline*}
 \eps^{-\beta}\norm*{E_{c}\bigl((\eps\Psi+\eps^{\beta}R)^{3}-(\eps\Psi)^{3}\bigr)}_{\X}\\*
 \lesssim\eps^{2}\norm{\Psi_{c}}_{C_{b}^{\theta}}^{2}\norm{R_{c}}_{\Y}+\eps^{3}\norm{\Psi_{c}}_{C_{b}^{\theta}}^{2}\norm{R_{s}}_{\Y}+\eps^{4}\norm{\Psi_{s}}_{C_{b}^{\theta}}^{2}\norm{R_{c}}_{\Y}+\eps^{5}\norm{\Psi_{s}}_{C_{b}^{\theta}}^{2}\norm{R_{s}}_{\Y}\\*
 +\eps^{1+\beta}\norm{\Psi_{c}}_{C_{b}^{\theta}}\norm{R_{c}}_{\Y}^{2}+\eps^{2+\beta}\norm{\Psi_{s}}_{C_{b}^{\theta}}\norm{R_{c}}_{\Y}^{2}+\eps^{1+2\beta}\norm{R_{c}}_{\Y}^{2}\norm{R_{s}}_{\Y}\\*
 +\eps^{3+\beta}\norm{\Psi_{c}}_{C_{b}^{\theta}}\norm{R_{s}}^{2}_{\Y}+\eps^{4+\beta}\norm{\Psi_{s}}_{C_{b}^{\theta}}\norm{R_{s}}_{\Y}^{2}+\eps^{2+2\beta}\norm{R_{c}}_{\Y}\norm{R_{s}}_{\Y}^{2}\\*
 +\eps^{2\beta}\norm{R_{c}}_{\Y}^{3}+\eps^{3+2\beta}\norm{R_{s}}_{\Y}^{3}.
\end{multline*}
Estimating the leading order and combining we find
\begin{multline}\label{non-lin:3}
 \eps^{-\beta}\norm*{E_{c}\bigl((\eps\Psi+\eps^{\beta}R)^{3}-(\eps\Psi)^{3}\bigr)}_{\X}\\*
 \lesssim\eps^{2}\bigl(\norm{\Psi_{c}}_{C_{b}^{\theta}}^{2}+\norm{\Psi_{s}}_{C_{b}^{\theta}}^{2}\bigr)\bigl(\norm{R_{c}}_{\Y}+\norm{R_{s}}_{\Y}\bigr)+\eps^{1+\beta}\bigl(\norm{\Psi_{c}}_{C_{b}^{\theta}}+\norm{\Psi_{s}}_{C_{b}^{\theta}}\bigr)\bigl(\norm{R_{c}}_{\Y}+\norm{R_{s}}_{\Y}\bigr)^{2}\\*
 +\eps^{2\beta}\bigl(\norm{R_{c}}_{\Y}+\norm{R_{s}}_{\Y}\bigr)\bigl(\norm{R_{c}}_{\Y}+\norm{R_{s}}_{\Y}\bigr)^{2}.
\end{multline}
Analogously, we have
\begin{multline*}
 \eps^{-\beta-1}\norm*{E_{s}\bigl((\eps\Psi+\eps^{\beta}R)^{3}-(\eps\Psi)^{3}\bigr)}_{\X}\\*
 \lesssim\eps\norm{\Psi_{c}}_{C_{b}^{\theta}}^{2}\norm{R_{c}}_{\Y}+\eps^{2}\norm{\Psi_{c}}_{C_{b}^{\theta}}^{2}\norm{R_{s}}_{\Y}+\eps^{3}\norm{\Psi_{s}}_{C_{b}^{\theta}}^{2}\norm{R_{c}}_{\Y}+\eps^{4}\norm{\Psi_{s}}_{C_{b}^{\theta}}^{2}\norm{R_{s}}_{\Y}\\*
 +\eps^{\beta}\norm{\Psi_{c}}_{C_{b}^{\theta}}\norm{R_{c}}_{\Y}^{2}+\eps^{1+\beta}\norm{\Psi_{s}}_{C_{b}^{\theta}}\norm{R_{c}}_{\Y}^{2}+\eps^{2\beta}\norm{R_{c}}_{\Y}^{2}\norm{R_{s}}_{\Y}\\*
 +\eps^{2+\beta}\norm{\Psi_{c}}_{C_{b}^{\theta}}\norm{R_{s}}^{2}_{\Y}+\eps^{3+\beta}\norm{\Psi_{s}}_{C_{b}^{\theta}}\norm{R_{s}}_{\Y}^{2}+\eps^{1+2\beta}\norm{R_{c}}_{\Y}\norm{R_{s}}_{\Y}^{2}\\*
 +\eps^{2\beta-1}\norm{R_{c}}_{\Y}^{3}+\eps^{2+2\beta}\norm{R_{s}}_{\Y}^{3}.
\end{multline*}
This yields
\begin{multline}\label{non-lin:4}
 \eps^{-\beta-1}\norm*{E_{s}\bigl((\eps\Psi+\eps^{\beta}R)^{3}-(\eps\Psi)^{3}\bigr)}_{\X}\\*
 \lesssim\eps\bigl(\norm{\Psi_{c}}_{C_{b}^{\theta}}^{2}+\eps\norm{\Psi_{s}}_{C_{b}^{\theta}}^{2}\bigr)\bigl(\norm{R_{c}}_{\Y}+\eps\norm{R_{s}}_{\Y}\bigr)+\eps^{\beta}\bigl(\norm{\Psi_{c}}_{C_{b}^{\theta}}+\norm{\Psi_{s}}_{C_{b}^{\theta}}\bigr)\bigl(\norm{R_{c}}_{\Y}+\norm{R_{s}}_{\Y}\bigr)^{2}\\*
+\eps^{2\beta-1}\bigl(\norm{R_{c}}_{\Y}+\norm{R_{s}}_{\Y}\bigr)\bigl(\norm{R_{c}}_{\Y}+\norm{R_{s}}_{\Y}\bigr)^{2}.
\end{multline}
Summarising \cref{non-lin:1,non-lin:2,non-lin:3,non-lin:4} we have for $\norm{R_{c}}_{\Y}\leq M_c$ and $\norm{R_{s}}_{\Y}\leq M_s$ that
\begin{multline*}
  \norm{\eps^{-\beta}E_{c}(N(\eps \Psi +\eps^{\beta}R)-N(\eps \Psi))}_{\X}\\*
   \lesssim \eps^{2}\bigl(\norm{\Psi_{c}}_{C_{b}^{\theta}}+\norm{\Psi_{s}}_{C_{b}^{\theta}}+\norm{\Psi_{c}}_{C_{b}^{\theta}}^{2}+\norm{\Psi_{s}}_{C_{b}^{\theta}}^{2}\bigr)\bigl(\norm{R_{s}}_{\Y}+\norm{R_{c}}_{\Y}\bigr)\\*
+\min\bigl\{\eps^{1+\beta},\eps^{2\beta}\bigr\}\bigl(1+\norm{\Psi_{c}}_{C_{b}^{\theta}}+\norm{\Psi_{s}}_{C_{b}^{\theta}}+M_s+M_c\bigr)\bigl(\norm{R_{c}}_{\Y}+\norm{R_{s}}_{\Y}\bigr)^{2} 
\end{multline*}
and
\begin{multline*}
   \norm{\eps^{-(\beta+1)}E_{s}(N(\eps \Psi +\eps^{\beta}R)-N(\eps \Psi))}_{\X}\\*
   \lesssim \bigl(\norm{\Psi_c}_{C_{b}^{\theta}}+\eps\norm{\Psi_{s}}_{C_{b}^{\theta}}+\eps\norm{\Psi_{c}}_{C_{b}^{\theta}}^{2}+\eps^2\norm{\Psi_{s}}_{C_{b}^{\theta}}^{2}\bigr)\bigl(\norm{R_{c}}_{\Y}+\eps\norm{R_{s}}_{\Y}\bigr)+\\*
+\eps^{\beta-1}\bigl(1+\norm{\Psi_{c}}_{C_{b}^{\theta}}+\norm{\Psi_{s}}_{C_{b}^{\theta}}+M_c+M_s\bigr)\bigl(\norm{R_{c}}_{\Y}+\norm{R_{s}}_{\Y}\bigr)^{2}.
\end{multline*}
Together with Lemma~\ref{Lem:Psi:Hoelder} this justifies \ref{It:ass:4} of Theorem~\ref{Thm:abstract:approximation}.
   
\section{The fractional Swift-Hohenberg operator}\label{Sec:fract:SW}

\subsection{Approximation of the fractional Swift-Hohenberg operator}\label{Sec:approximation:SH-operator}

Due to the non-locality of the fractional Laplacian, we cannot directly compute the residuum. Instead some technical preparation is required which is the content of this section. More precisely, we exploit that the Fourier symbol $-(1-\abs{\xi}^\power)^2$ of $-(1-(-\Delta)^{\power/2})^2$ is smooth except for $\xi=0$ which allows to Taylor approximate around the critical modes $\xi=\pm 1$ analogously to the classical situation $\power=2$ while the case $\xi=0$ will be treated separately.
\begin{lemma}\label{Lem:Taylor:symbol}
 For $\xi\in (-\infty, 0)$ and $\xi\in (0,\infty)$ respectively, we have the following representation of the Fourier symbol $-(1-\abs{\xi}^{\power})^{2}$ of $-(1-(-\Delta)^{\power/2})^2$:
 \begin{equation*}
  -(1-\abs{\xi}^{\power})^{2}=-\power^{2}(\xi\mp 1)^{2}-\int_{\pm 1}^{\xi}\bigl(3\del_{r}\abs{r}^{\power}\del_{r}^{2}\abs{r}^{\power}-(1-\abs{r}^{\power})\del_{r}^{3}\abs{r}^{\power}\bigr)(\xi-r)^{2}\dr
 \end{equation*}
\end{lemma}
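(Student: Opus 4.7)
The identity is a third-order Taylor expansion with integral remainder of the one-variable function $f(\xi)\vcc=-(1-\abs{\xi}^{\power})^{2}$, applied separately on each of the two half-lines where $\xi\mapsto\abs{\xi}^{\power}$ is smooth. I will carry this out on $(0,\infty)$ with base point $\xi=1$; the case on $(-\infty,0)$ with base point $\xi=-1$ is identical after the sign change in $\xi$ and uses the evenness of $\abs{\cdot}^{\power}$.

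First I set $g(\xi)=\abs{\xi}^{\power}$, which is $C^{\infty}$ on $\R\setminus\{0\}$, so $f=-(1-g)^{2}$ is $C^{\infty}$ there as well. Direct differentiation gives
\begin{equation*}
 f'=2(1-g)g',\qquad f''=-2(g')^{2}+2(1-g)g'',\qquad f'''=-6g'g''+2(1-g)g'''.
\end{equation*}
At the base point $\xi=\pm 1$ one has $g(\pm 1)=1$ and $g'(\pm 1)=\pm\power$, hence
\begin{equation*}
 f(\pm 1)=0,\qquad f'(\pm 1)=0,\qquad f''(\pm 1)=-2\power^{2}.
\end{equation*}
These three values are the mechanism by which all lower-order terms in the Taylor expansion collapse, leaving only the quadratic term $-\power^{2}(\xi\mp 1)^{2}$ and the integral remainder.

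Next I apply Taylor's formula with integral remainder of order three on the interval between $\pm 1$ and $\xi$ (which lies entirely in one of the two smooth half-lines):
\begin{equation*}
 f(\xi)=f(\pm 1)+f'(\pm 1)(\xi\mp 1)+\tfrac{1}{2}f''(\pm 1)(\xi\mp 1)^{2}+\tfrac{1}{2}\int_{\pm 1}^{\xi}f'''(r)(\xi-r)^{2}\dr.
\end{equation*}
Plugging in the values above and the expression for $f'''$, the first two terms vanish, the quadratic term reduces to $-\power^{2}(\xi\mp 1)^{2}$, and the integrand becomes
\begin{equation*}
 \tfrac{1}{2}f'''(r)=-3\del_{r}\abs{r}^{\power}\del_{r}^{2}\abs{r}^{\power}+(1-\abs{r}^{\power})\del_{r}^{3}\abs{r}^{\power}.
\end{equation*}
Pulling out the minus sign yields exactly the formula claimed in the lemma.

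The only issue one could worry about is the non-smoothness of $\abs{\cdot}^{\power}$ at the origin, but since for fixed $\xi>0$ (resp.\ $\xi<0$) the segment from $\pm 1$ to $\xi$ stays away from $0$, the derivatives $\del_{r}^{k}\abs{r}^{\power}$ are classical and bounded on the segment, so Taylor's formula applies without modification. No estimate is needed here; the statement is an exact identity.
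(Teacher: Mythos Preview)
Your proof is correct and follows exactly the approach the paper uses: the paper's own proof consists of a single sentence stating that the identity follows immediately by Taylor approximation, using that $\xi\mapsto\abs{\xi}^{\power}$ is $C^{\infty}$ away from $0$. Your write-up simply makes this explicit by computing the derivatives of $f=-(1-g)^{2}$ and inserting the values $f(\pm1)=f'(\pm1)=0$, $f''(\pm1)=-2\power^{2}$ into the third-order integral remainder formula.
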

\begin{proof}
 This follows immediately by Taylor approximation using that $\xi\mapsto \abs{\xi}^{\power}$ is $C^{\infty}$ apart from $0$.
\end{proof}
Moreover, for modes centered at $\pm 2$ we will need another representation of the remainder.
\begin{lemma}\label{Lem:remainder:at:two}
 For $\xi\in (-\infty, 0)$ and $\xi\in (0,\infty)$ respectively, we have
  \begin{multline*}
  \int_{\pm 1}^{\xi}\bigl(3\del_{r}\abs{r}^{\power}\del_{r}^{2}\abs{r}^{\power}-(1-\abs{r}^{\power})\del_{r}^{3}\abs{r}^{\power}\bigr)(\xi-r)^{2}\dr\\*
  \shoveleft{=\int_{\pm 1}^{\pm 2}\bigl(3\del_{r}\abs{r}^{\power}\del_{r}^{2}\abs{r}^{\power}-(1-\abs{r}^{\power})\del_{r}^{3}\abs{r}^{\power}\bigr)(\pm 2-r)^{2}\dr}\\*
  +\int_{\pm 1}^{\pm 2}\bigl(3\del_{r}\abs{r}^{\power}\del_{r}^{2}\abs{r}^{\power}-(1-\abs{r}^{\power})\del_{r}^{3}\abs{r}^{\power}\bigr)(\xi\pm 2 -2r)\dr(\xi\mp 2)\\*
  +\int_{\pm 2}^{\xi}\bigl(3\del_{r}\abs{r}^{\power}\del_{r}^{2}\abs{r}^{\power}-(1-\abs{r}^{\power})\del_{r}^{3}\abs{r}^{\power}\bigr)(\xi-r)^{2}\dr.
 \end{multline*}
\end{lemma}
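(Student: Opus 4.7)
\textbf{Proof plan for Lemma~\ref{Lem:remainder:at:two}.} The identity is a purely algebraic rewriting of the integral on the left, so the plan is essentially a two-step computation with no analytical subtleties. Abbreviate the integrand coefficient by
\begin{equation*}
 F(r)\vcc=3\del_{r}\abs{r}^{\power}\del_{r}^{2}\abs{r}^{\power}-(1-\abs{r}^{\power})\del_{r}^{3}\abs{r}^{\power},
\end{equation*}
so that the left-hand side is $\int_{\pm 1}^{\xi}F(r)(\xi-r)^{2}\dr$.

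First, I split the domain of integration at the intermediate point $\pm 2$ (which lies between $\pm 1$ and $\xi$ in the relevant range), writing
\begin{equation*}
 \int_{\pm 1}^{\xi}F(r)(\xi-r)^{2}\dr=\int_{\pm 1}^{\pm 2}F(r)(\xi-r)^{2}\dr+\int_{\pm 2}^{\xi}F(r)(\xi-r)^{2}\dr.
\end{equation*}
The second piece already matches the last term on the right-hand side of the claimed identity, so the entire task reduces to rewriting the first piece.

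Second, on $[\pm 1,\pm 2]$, I expand $(\xi-r)^{2}$ around $r=\pm 2$ using the elementary factorisation
\begin{equation*}
 (\xi-r)^{2}-(\pm 2-r)^{2}=\bigl((\xi-r)-(\pm 2-r)\bigr)\bigl((\xi-r)+(\pm 2-r)\bigr)=(\xi\mp 2)(\xi\pm 2-2r),
\end{equation*}
which gives $(\xi-r)^{2}=(\pm 2-r)^{2}+(\xi\mp 2)(\xi\pm 2-2r)$. Substituting this into $\int_{\pm 1}^{\pm 2}F(r)(\xi-r)^{2}\dr$ and pulling the factor $(\xi\mp 2)$ outside the integral (since it does not depend on $r$) produces precisely the first two terms on the right-hand side of the claimed identity. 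Combining with the second piece from the splitting completes the proof.

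\textbf{Main obstacle.} There is no genuine obstacle here: the statement is an algebraic rearrangement and the regularity of $|r|^{\power}$ away from $0$ is not needed (the integrals exist as soon as they do on the left). The only care required is bookkeeping of the signs in the $\pm$ cases, which I would handle by doing the $+$ case in detail and then observing that the $-$ case follows verbatim with the indicated sign changes.
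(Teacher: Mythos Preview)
Your proposal is correct and is essentially the same argument as the paper's, which simply notes the identity $(\xi-r)^{2}-(\pm 2-r)^2-(\xi\pm 2-2r)(\xi\mp 2)=0$; you have merely made the splitting at $\pm 2$ and the difference-of-squares factorisation explicit. One small remark: your parenthetical that $\pm 2$ ``lies between $\pm 1$ and $\xi$'' is unnecessary and in general false, but the splitting is of course valid for any $\xi$ in the stated half-line as an identity of oriented integrals.
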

\begin{proof}
 This follows immediately noting that 
 \begin{equation*}
  (\xi-r)^{2}-(\pm 2-r)^2-(\xi\pm 2-2r)(\xi\mp 2)=0.
 \end{equation*}
\end{proof}
For later use, we introduce the following notation:
\begin{equation}\label{eq:mult:r:pm}
 \mathfrak{r}^{\pm}(\xi)=\int_{\pm 1}^{\xi}\bigl(3\del_{r}\abs{r}^{\power}\del_{r}^{2}\abs{r}^{\power}-(1-\abs{r}^{\power})\del_{r}^{3}\abs{r}^{\power}\bigr)(\xi-r)^{2}\dr
\end{equation}
and
\begin{equation}\label{eq:mult:m:pm}
 \begin{aligned}
  m^{1,\pm}(\xi)&=\int_{\pm 1}^{\pm 2}\bigl(3\del_{r}\abs{r}^{\power}\del_{r}^{2}\abs{r}^{\power}-(1-\abs{r}^{\power})\del_{r}^{3}\abs{r}^{\power}\bigr)(\xi\pm 2 -2r)\dr(\xi\mp 2)\\
  m^{2,\pm}(\xi)&=\int_{\pm 2}^{\xi}\bigl(3\del_{r}\abs{r}^{\power}\del_{r}^{2}\abs{r}^{\power}-(1-\abs{r}^{\power})\del_{r}^{3}\abs{r}^{\power}\bigr)(\xi-r)^{2}\dr.
 \end{aligned}
\end{equation}
We denote the operators corresponding to the Fourier multipliers $m^{1,\pm}$ and $m^{2,\pm}$ respectively by $M^{1,\pm}$ and $M^{2,\pm}$. Let us also recall from~\eqref{eq:c:pm} that
\begin{equation*}
 \mathfrak{c}^{\pm}=\int_{\pm 1}^{\pm 2}\bigl(3\del_{r}\abs{r}^{\power}\del_{r}^{2}\abs{r}^{\power}-(1-\abs{r}^{\power})\del_{r}^{3}\abs{r}^{\power}\bigr)(\pm 2-r)^{2}\dr.
\end{equation*}
The following lemma shows that the function $A_2$ in \eqref{eq:choice:of:parameters} is well-defined.

\begin{lemma}\label{Lem:cplus}
 For $\power\in (0,2)$ we have
 \begin{equation*}
  \power^{2}+\mathfrak{c}^{\pm}>0.
 \end{equation*}
\end{lemma}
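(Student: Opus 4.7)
The plan is to avoid computing the defining integral of $\mathfrak{c}^{\pm}$ directly, and instead to exploit the fact that this integral is by construction a Taylor remainder of a concrete elementary function. Indeed, Lemma~\ref{Lem:Taylor:symbol} is nothing but the second-order Taylor expansion with integral remainder of $g(\xi)\vcc= -(1-\abs{\xi}^{\power})^{2}$ around $\pm 1$, valid because $\xi\mapsto \abs{\xi}^{\power}$ is $C^{\infty}$ away from $0$ and the base point lies in the smooth region. The identity of Lemma~\ref{Lem:Taylor:symbol} can be rewritten pointwise as
\begin{equation*}
 \mathfrak{r}^{\pm}(\xi)=(1-\abs{\xi}^{\power})^{2}-\power^{2}(\xi\mp 1)^{2}.
\end{equation*}
Since the defining integral \eqref{eq:mult:r:pm} is $\mathfrak{r}^{\pm}(\xi)$, and since comparing \eqref{eq:c:pm} with \eqref{eq:mult:r:pm} shows $\mathfrak{c}^{\pm}=\mathfrak{r}^{\pm}(\pm 2)$, the whole task reduces to evaluating this identity at $\xi=\pm 2$.

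So the proof will proceed in two short steps. First, I record that evaluating the above formula at $\xi = \pm 2$ yields
\begin{equation*}
 \mathfrak{c}^{\pm}=(1-2^{\power})^{2}-\power^{2},
\end{equation*}
and therefore $\power^{2}+\mathfrak{c}^{\pm}=(1-2^{\power})^{2}$. Second, since $2^{\power}\neq 1$ for $\power\in(0,2)$, the right-hand side is strictly positive, which proves the claim. There is no real obstacle here beyond making sure the Taylor formula of Lemma~\ref{Lem:Taylor:symbol} is algebraically the correct expansion of $g$; this is a direct computation using $g(\pm 1)=0$, $g'(\pm 1)=0$, $g''(\pm 1)=-2\power^{2}$, and the expression of $g'''$ in terms of derivatives of $\abs{r}^{\power}$, which matches the integrand $3\partial_{r}\abs{r}^{\power}\partial_{r}^{2}\abs{r}^{\power}-(1-\abs{r}^{\power})\partial_{r}^{3}\abs{r}^{\power}$ up to the sign absorbed into the Taylor remainder convention.
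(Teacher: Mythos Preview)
Your argument is correct and leads to exactly the same closed form as the paper, namely $\power^{2}+\mathfrak{c}^{\pm}=(2^{\power}-1)^{2}$, which is strictly positive on $(0,2)$.

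The route, however, differs from the paper's. The paper performs an explicit evaluation of the integral \eqref{eq:c:pm}, obtaining $\mathfrak{c}^{+}=2^{2\power}-2^{\power+1}+(1-\power^{2})$, and then observes that $\power^{2}+\mathfrak{c}^{+}=(2^{\power}-1)^{2}$. You instead bypass any integration by recognising that $\mathfrak{c}^{\pm}=\mathfrak{r}^{\pm}(\pm 2)$ is precisely the Taylor remainder of $g(\xi)=-(1-\abs{\xi}^{\power})^{2}$ at $\xi=\pm 2$, so that Lemma~\ref{Lem:Taylor:symbol} immediately yields $\mathfrak{c}^{\pm}=(1-2^{\power})^{2}-\power^{2}$. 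Your approach is cleaner in that it reuses the structural identity already established in Lemma~\ref{Lem:Taylor:symbol} rather than redoing an integral by hand; the paper's computation, on the other hand, is self-contained and does not require the reader to track back to that lemma. Both are elementary and yield the same conclusion.
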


\begin{proof}
 An explicit evaluation of the integral yields
 \begin{equation*}
  \mathfrak{c}^{+}=\int_{1}^{2}\Bigl(3\del_{r}r^{\power}\del_{r}^{2}r^{\power}-(1-r^{\power})\del_{r}^{3}r^{\power}\Bigr)(2-r)^{2}\dd{r}=2^{2\power}-2^{\power+1}+(1-\power^2)
 \end{equation*}
 from which the claim immediately follows.
\end{proof}
From the explicit formulas for $\mathfrak{r}^{\pm}$, $m^{1,\pm}$ and $m^{2,\pm}$ we get the following estimates.
\begin{lemma}\label{Lem:est:symbols}
 For any $\nu\leq 1/2$ there exists a constant $C_{\nu}$ such that 
 \begin{equation*}
  \begin{aligned}
   \abs{\mathfrak{r}^{\pm}(\xi)}&\leq C_\nu \abs{\xi\mp 1}^{3} && \text{if } \xi\in [\pm 1-\nu,\pm 1+\nu]\\
   \abs{m^{1,\pm}(\xi)}&\leq C_\nu \abs{\xi\mp 2} && \text{if } \xi\in [\pm 2-\nu,\pm 2+\nu]\\
   \abs{m^{2,\pm}(\xi)}&\leq C_\nu \abs{\xi\mp 2}^{3} && \text{if } \xi\in [\pm 2-\nu,\pm 2+\nu].
  \end{aligned}
 \end{equation*}
\end{lemma}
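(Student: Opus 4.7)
The plan is to observe that the integrand
\[
g(r) \vcc= 3\del_{r}\abs{r}^{\power}\del_{r}^{2}\abs{r}^{\power}-(1-\abs{r}^{\power})\del_{r}^{3}\abs{r}^{\power}
\]
appearing in all three expressions is smooth on $\R\setminus\{0\}$, so it is bounded on any compact set that avoids the origin. The assumption $\nu\leq 1/2$ is exactly what we need to guarantee that the integration domains in \eqref{eq:mult:r:pm} and \eqref{eq:mult:m:pm} stay away from $0$, and hence $g$ can be pulled out in sup-norm.

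For the estimate on $\mathfrak{r}^{\pm}$, I would observe that whenever $\xi\in[\pm 1-\nu,\pm 1+\nu]$ the integration variable $r$ in \eqref{eq:mult:r:pm} lies in the interval of endpoints $\pm 1$ and $\xi$, which is contained in $[\pm 1-\nu,\pm 1+\nu]\subset [\pm 1/2,\pm 3/2]$. On this set $\abs{g(r)}\leq K_\nu$ for some $K_\nu<\infty$, and hence
\[
\abs{\mathfrak{r}^{\pm}(\xi)}\leq K_\nu\, \Bigl|\int_{\pm 1}^{\xi}(\xi-r)^{2}\dr\Bigr|=\frac{K_\nu}{3}\abs{\xi\mp 1}^{3},
\]
giving the desired cubic bound.

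The estimate on $m^{2,\pm}$ is completely analogous: for $\xi\in[\pm 2-\nu,\pm 2+\nu]$ the range of $r$ lies in $[\pm 2-\nu,\pm 2+\nu]\subset[\pm 3/2,\pm 5/2]$, again avoiding $0$, and the same sup-norm argument with the polynomial integration yields $\abs{m^{2,\pm}(\xi)}\leq \frac{K_\nu'}{3}\abs{\xi\mp 2}^{3}$. For $m^{1,\pm}$, the factor $(\xi\mp 2)$ is already explicit, so I would only need to bound the remaining integral $\int_{\pm 1}^{\pm 2} g(r)(\xi\pm 2-2r)\dr$; since $g$ is bounded on the fixed compact interval $[\pm 1,\pm 2]\subset\R\setminus\{0\}$ and the linear factor $(\xi\pm 2-2r)$ is bounded uniformly for $\xi$ in the specified range and $r$ in $[\pm 1,\pm 2]$, this integral is bounded by a constant depending on $\nu$, producing the claimed linear bound.

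No substantive obstacle is anticipated: the entire argument rests on the smoothness of $r\mapsto\abs{r}^{\power}$ away from the origin together with explicit evaluation of elementary polynomial integrals. The only delicate point is the bookkeeping to ensure that, in each of the three cases, the condition $\nu\leq 1/2$ keeps every integration interval uniformly away from the singularity at $r=0$, so that the constant $K_\nu$ really can be chosen finite and depending only on $\nu$ (and on $\power$).
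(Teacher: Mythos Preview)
Your proposal is correct and follows exactly the same approach as the paper: the paper's proof is a single sentence stating that the estimate is ``a straightforward estimate exploiting that $r\mapsto \bigl(3\del_{r}\abs{r}^{\power}\del_{r}^{2}\abs{r}^{\power}-(1-\abs{r}^{\power})\del_{r}^{3}\abs{r}^{\power}\bigr)$ is smooth apart from $r=0$.'' Your write-up simply spells out the details of this observation, including the role of the restriction $\nu\leq 1/2$ in keeping the integration intervals away from the singularity at $r=0$.
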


\begin{proof}
 This is a straightforward estimate exploiting that $r\mapsto \bigl(3\del_{r}\abs{r}^{\power}\del_{r}^{2}\abs{r}^{\power}-(1-\abs{r}^{\power})\del_{r}^{3}\abs{r}^{\power}\bigr)$ is smooth apart from $r=0$.
\end{proof}

\subsection{Scaling properties}

In this subsection we collect several elementary estimates for the interplay between Fourier multipliers, scaled variables and the $H^\theta$ norm. These results will particularly be useful when estimating the residuum in Section~\ref{Sec:residuum}.

The following lemma provides $H^\mu$ estimates on products.
\begin{lemma}\label{Lem:Sob:norm:prod}
 Let $f,g,h\in H^{\mu}$ with $\mu>1/2$. We have
 \begin{equation*}
  \begin{aligned}
   \norm{fg}_{H^{\mu}}&\leq \norm{f}_{H^{\mu}}\norm{\hat{g}}_{L^{1}}+\norm{\hat{f}}_{L^{1}}\norm{g}_{H^{\mu}}\\
   \norm{fgh}_{H^{\mu}}&\leq \norm{f}_{H^{\mu}}\norm{\hat{g}}_{L^{1}}\norm{\hat{h}}_{L^{1}}+\norm{\hat{f}}_{L^{1}}\norm{g}_{H^{\mu}}\norm{\hat{h}}_{L^{1}}+\norm{\hat{f}}_{L^{1}}\norm{\hat{g}}_{L^{1}}\norm{h}_{H^{\mu}}.
  \end{aligned}
 \end{equation*}
\end{lemma}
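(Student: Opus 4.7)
The plan is to reduce both inequalities to weighted convolution estimates by combining the Fourier identities $\widehat{fg}=(2\pi)^{-1/2}\hat f\ast\hat g$ and $\widehat{fgh}=(2\pi)^{-1}\hat f\ast\hat g\ast\hat h$ with the definition $\norm{u}_{H^\mu}=\norm{w^\mu\hat u}_{L^2}$.

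For the bilinear bound I would first verify the elementary weight-splitting $w(\xi)\leq w(\xi-\eta)+w(\eta)$: squaring the right-hand side and using $w(a)\geq |a|$ gives $(w(\xi-\eta)+w(\eta))^2\geq 2+(|\xi-\eta|+|\eta|)^2\geq 1+|\xi|^2 = w(\xi)^2$. Raising to the power $\mu$ and using subadditivity $(a+b)^\mu\leq a^\mu+b^\mu$ for $\mu\in[0,1]$ (with a harmless factor $2^{\mu-1}$ absorbed when $\mu>1$) yields $w(\xi)^\mu\leq w(\xi-\eta)^\mu+w(\eta)^\mu$. Substituting this into the convolution representation produces
\begin{equation*}
 w(\xi)^\mu\abs{\widehat{fg}(\xi)}\leq c\bigl((w^\mu|\hat f|)\ast|\hat g|\bigr)(\xi)+c\bigl(|\hat f|\ast(w^\mu|\hat g|)\bigr)(\xi).
\end{equation*}
Taking the $L^2$-norm in $\xi$ and applying Young's convolution inequality $\norm{u\ast v}_{L^2}\leq \norm{u}_{L^2}\norm{v}_{L^1}$ to each piece gives the claimed bilinear bound.

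The trilinear inequality follows by the same scheme with one extra convolution: iterate the splitting to obtain $w(\xi)^\mu\leq w(\xi-\eta-\zeta)^\mu+w(\eta)^\mu+w(\zeta)^\mu$, break the triple convolution $\hat f\ast\hat g\ast\hat h$ into three pieces in which the weight sits on exactly one factor, and apply Young's inequality twice inside each piece (once to collapse a pure-$L^1$ factor, and once for the final $L^2\ast L^1\to L^2$ step) to produce the three cyclic products appearing on the right-hand side.

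There is no deeper obstacle here; the argument is routine modulo the bookkeeping of Fourier constants and the weight-splitting identity. The hypothesis $\mu>1/2$ does not enter the algebraic derivation itself but is precisely what guarantees the right-hand sides are finite for $f,g,h\in H^\mu$, since Cauchy-Schwarz yields $\norm{\hat u}_{L^1}\leq\norm{w^{-\mu}}_{L^2}\norm{u}_{H^\mu}$ and $w^{-\mu}\in L^2(\R)$ iff $2\mu>1$.
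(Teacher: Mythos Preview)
Your argument is correct and is precisely the standard weighted-convolution/Young's inequality computation; this is what the reference \cite[Lemma~7.3.29]{ScU17} invoked in the paper's proof presumably contains. The only procedural difference is that the paper obtains the trilinear estimate by iterating the bilinear one---applying it to $(fg)h$ and then bounding $\norm{\widehat{fg}}_{L^1}\lesssim\norm{\hat f}_{L^1}\norm{\hat g}_{L^1}$ via Young---whereas you redo the three-factor convolution splitting from scratch; the two routes are equivalent. Your observation that a factor $2^{\mu-1}$ appears in the weight-splitting for $\mu>1$ is accurate (indeed $w(\xi)^\mu\le w(\xi-\eta)^\mu+w(\eta)^\mu$ fails without a constant already at $\mu=2$), so the lemma as stated with a bare~$\leq$ is literally correct only up to a $\mu$-dependent constant; this is harmless for every application in the paper.
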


\begin{proof}
 The first inequality is shown in \cite[Lemma~7.3.29]{ScU17}, the second one follows immediately from the first since $\norm{\hat{u}\ast \hat{v}}_{L^{1}}\leq \norm{\hat{u}}_{L^{1}}\norm{\hat{v}}_{L^{1}}$.
\end{proof}

The next statement shows that $E_0$ is a bounded linear operator.

\begin{lemma}\label{Lem:E0}
 Let $f\in H^{\mu}$. We have 
 \begin{equation*}
  \norm{E_{0}f}_{H^{\mu}}\leq \norm{f}_{H^{\mu}}.
 \end{equation*}
 Moreover, if $\mu>1/2$ we have
 \begin{equation*}
  \norm{\widehat{E_{0}f(\eps \cdot)}}_{L^{1}}\lesssim \norm{\hat{f}}_{L^{1}}\lesssim \norm{f}_{H^{\mu}}.
 \end{equation*}
\end{lemma}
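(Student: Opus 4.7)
The statement is essentially a pair of elementary bounds, so the plan is short and computational rather than conceptual. The first inequality follows directly from the fact that $E_0$ is a Fourier multiplier with symbol $m_0 = \chi_{B_{r_0}(0)}$ satisfying $|m_0(\xi)| \leq 1$ pointwise. Using the definition $\|u\|_{H^\mu} = \|\hat u\|_{L^2_\mu}$, I would write
\begin{equation*}
 \norm{E_{0}f}_{H^{\mu}}^{2}=\int_{\R}\abs{m_{0}(\xi)}^{2}\abs{\hat{f}(\xi)}^{2}(1+\xi^{2})^{\mu}\dxi\leq \int_{\R}\abs{\hat{f}(\xi)}^{2}(1+\xi^{2})^{\mu}\dxi=\norm{f}_{H^{\mu}}^{2}.
\end{equation*}

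For the second part, the plan is first to reduce the scaled statement to an unscaled one by invoking the Fourier scaling identity. Setting $g(x)\vcc=(E_{0}f)(\eps x)$, one has $\hat g(\xi)=\eps^{-1}\widehat{E_{0}f}(\xi/\eps)$, and changing variables $\eta=\xi/\eps$ in the $L^1$ integral gives $\norm{\hat g}_{L^{1}}=\norm{\widehat{E_{0}f}}_{L^{1}}$. Then, using $\widehat{E_0 f}=m_0\hat f$ and $|m_0|\leq 1$ pointwise, one gets $\norm{\widehat{E_{0}f}}_{L^{1}}\leq \norm{\hat f}_{L^{1}}$, which establishes the first $\lesssim$ with an implicit constant equal to $1$ (so the $\eps$-scaling disappears entirely).

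The remaining estimate $\norm{\hat{f}}_{L^{1}}\lesssim \norm{f}_{H^{\mu}}$ is the standard embedding and is the only place where the hypothesis $\mu>1/2$ enters. The plan is to insert the weight $(1+\xi^2)^{\mu/2}$ and apply Cauchy--Schwarz:
\begin{equation*}
 \int_{\R}\abs{\hat f(\xi)}\dxi = \int_{\R}(1+\xi^2)^{-\mu/2}\cdot (1+\xi^2)^{\mu/2}\abs{\hat f(\xi)}\dxi\leq \Bigl(\int_{\R}(1+\xi^2)^{-\mu}\dxi\Bigr)^{1/2}\norm{f}_{H^{\mu}}.
\end{equation*}
The integral $\int_{\R}(1+\xi^2)^{-\mu}\dxi$ is finite precisely when $\mu>1/2$ (in one space dimension), yielding the constant $C_\mu$ absorbed into the $\lesssim$.

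There is no real obstacle here; the only subtle point worth flagging is the use of the Fourier scaling identity to confirm that the $L^1$ norm of $\hat g$ is genuinely $\eps$-independent, so that the estimate on $\widehat{E_0 f(\eps\cdot)}$ reduces cleanly to one on $\widehat{E_0 f}$. After that, the two pointwise facts $|m_0|\leq 1$ and $(1+\xi^2)^{-\mu}\in L^1(\R)$ for $\mu>1/2$ finish the proof.
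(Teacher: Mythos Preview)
Your proof is correct and follows the same approach as the paper: the first bound and the first half of the second bound come directly from $|m_0|\leq 1$, and the final estimate $\norm{\hat f}_{L^1}\lesssim\norm{f}_{H^\mu}$ is obtained by inserting the weight $(1+\xi^2)^{\mu/2}$ and applying Cauchy--Schwarz (which the paper phrases as H\"older and defers to Lemma~\ref{Lem:L1:Htheta}). Your write-up simply spells out the Fourier-scaling computation that the paper leaves implicit.
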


\begin{proof}
 The first claim is an immediate consequence of $m_0\leq 1$. The first estimate in the second claim follows again from the definition of $E_0$ and $m_0\leq 1$. The second estimate is a direct consequence of Hölder's inequality (see also Lemma~\ref{Lem:L1:Htheta}).
\end{proof}

\begin{remark}
    The estimate in Lemma~\ref{Lem:E0} is far from being optimal since $E_{0}$ is smoothing. In fact we can easily obtain $\norm{f}_{L^{2}}$ on the right-hand side. However, we will not use such an improved estimate in the sequel.
\end{remark}

The operator $E_0^c=\Id-E_0$ provides an additional order of $\eps^\mu$ in the $H^\mu$-norm when applied to scaled variables as can be seen from the next statement (see also \cite{Sch94b}).

\begin{lemma}\label{Lem:E0c}
 For $f\in H^{\mu}$ we have
 \begin{equation*}
  \norm{(E_{0}^{c}f(\eps \cdot))\ee^{\im k \cdot}}_{H^{\mu}}\leq C_{r_0}(1+k+k^{2})^{\mu/2}\eps^{\mu-\frac{1}{2}}\norm{f}_{H^{\mu}}.
 \end{equation*}
 Moreover, if $f\in H^{\mu+\gamma}$ with $\gamma>1/2$ we have
  \begin{equation*}
  \norm{\widehat{(E_{0}^{c}f(\eps \cdot))\ee^{\im k\cdot}}}_{L^{1}}\leq C_{r_0,\gamma} \eps^{\mu}\norm{f}_{H^{\mu+\gamma}}.
 \end{equation*}
\end{lemma}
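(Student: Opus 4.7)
The plan is to translate everything into Fourier variables and exploit two facts: the Fourier symbol of $E_0^c$ is $\chi_{\{\abs{\xi}\geq r_0\}}$, and rescaling sends $\widehat{f(\eps\cdot)}(\xi)=\eps^{-1}\hat f(\xi/\eps)$, so the cut-off $E_0^c$ acts on the scaled function by restricting to $\abs{\eta}\geq r_0$, which after the inverse substitution $\zeta=\eta/\eps$ becomes the far tail $\abs{\zeta}\geq r_0/\eps$. Modulation by $\ee^{\im k\cdot}$ merely shifts the Fourier variable by $k$, so a change of variables moves the effect of $k$ onto the Sobolev weight $(1+\xi^{2})^{\mu}$.

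For the first estimate I would write
\begin{equation*}
 \norm{(E_{0}^{c}f(\eps\cdot))\ee^{\im k\cdot}}_{H^{\mu}}^{2}=\eps^{-2}\int_{\abs{\eta}\geq r_{0}}(1+(\eta+k)^{2})^{\mu}\abs{\hat f(\eta/\eps)}^{2}\dd\eta,
\end{equation*}
and then substitute $\zeta=\eta/\eps$. The key pointwise estimates are: (i) $1+(\eps\zeta+k)^{2}\leq 2(1+k^{2})(1+\eps^{2}\zeta^{2})$, which follows from $2\abs{k}\abs{\eps\zeta}\leq k^{2}+\eps^{2}\zeta^{2}$; and (ii) on the support region $\abs{\zeta}\geq r_{0}/\eps$ one has $1+\eps^{2}\zeta^{2}\leq (1+r_{0}^{-2})\eps^{2}\zeta^{2}$, so $(1+\eps^{2}\zeta^{2})^{\mu}\leq C_{r_{0}}\eps^{2\mu}(1+\zeta^{2})^{\mu}$. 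Combining these pulls out the factors $(1+k^{2})^{\mu}$ and $\eps^{2\mu}$ from under the integral, leaving $\int(1+\zeta^{2})^{\mu}\abs{\hat f(\zeta)}^{2}\dd\zeta=\norm{f}_{H^{\mu}}^{2}$; together with the $\eps^{-1}$ from the substitution this yields $C_{r_{0}}(1+k^{2})^{\mu}\eps^{2\mu-1}\norm{f}_{H^{\mu}}^{2}$, whose square root bounds the claim since $(1+k^{2})^{1/2}\leq (1+\abs{k}+k^{2})^{1/2}$.

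For the second estimate the same two substitutions give the clean formula
\begin{equation*}
 \norm{\widehat{(E_{0}^{c}f(\eps\cdot))\ee^{\im k\cdot}}}_{L^{1}}=\int_{\abs{\zeta}\geq r_{0}/\eps}\abs{\hat f(\zeta)}\dd\zeta,
\end{equation*}
on which I apply Cauchy--Schwarz with weight $(1+\zeta^{2})^{\mu+\gamma}$ to extract $\norm{f}_{H^{\mu+\gamma}}$, leaving the tail integral $\int_{\abs{\zeta}\geq r_{0}/\eps}(1+\zeta^{2})^{-(\mu+\gamma)}\dd\zeta$. This tail converges precisely because $\gamma>1/2$ (so the integrand decays faster than $\abs{\zeta}^{-1}$) and evaluates to $C_{r_{0},\gamma}\eps^{2(\mu+\gamma)-1}$, whose square root is $C\eps^{\mu+\gamma-1/2}\leq C\eps^{\mu}$ since $\eps\leq 1$.

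There is no serious obstacle here; the only point requiring a little care is the splitting $1+(\eps\zeta+k)^{2}\leq 2(1+k^{2})(1+\eps^{2}\zeta^{2})$, which is what cleanly separates the $k$-dependence from the factor to be absorbed into the scaled $H^{\mu}$-norm of $f$, and the verification that $\gamma>1/2$ is exactly the integrability threshold needed to produce a finite tail integral in the second part.
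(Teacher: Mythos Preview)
Your proof is correct and follows essentially the same route as the paper: pass to Fourier variables, shift by $k$, exploit the support $\abs{\xi}\geq r_0$ of $1-m_0$ to extract the power of $\eps$, and (for the $L^1$ bound) apply Cauchy--Schwarz against the Sobolev weight. The only cosmetic difference is that in the second part the paper inserts a factor $\abs{\xi}^{\mu}$ coming from the cutoff \emph{before} Cauchy--Schwarz (obtaining exactly $\eps^{\mu}$), whereas you apply Cauchy--Schwarz directly over the tail region and obtain the slightly sharper $\eps^{\mu+\gamma-1/2}$ before discarding the excess $\eps^{\gamma-1/2}$.
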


\begin{proof}
To prove the first claim we note that
 \begin{multline*}
  \norm{(E_{0}^{c}f(\eps \cdot))\ee^{\im k \cdot}}_{H^{\mu}}=\norm{\widehat{E_{0}^{c}f(\eps \cdot))\ee^{\im k \cdot}}}_{L^{2}_{\mu}}=\frac{1}{\eps}\biggl(\int_{\R}\abs[\Big]{(1-m_0(\xi-k))\hat{f}\Bigl(\frac{\xi-k}{\eps}\Bigr)(1+\xi^{2})^{\mu/2}}^{2}\dd{\xi}\biggr)^{1/2}\\*
  =\frac{1}{\eps}\biggl(\int_{\R}\abs[\Big]{(1-m_0(\xi))\hat{f}\Bigl(\frac{\xi}{\eps}\Bigr)(1+(k+\xi)^{2})^{\mu/2}}^{2}\dd{\xi}\biggr)^{1/2}\\*
  \leq (1+k+k^{2})^{\mu/2}\frac{1}{\eps}\biggl(\int_{\R}\abs[\Big]{(1-m_0(\xi))\hat{f}\Bigl(\frac{\xi}{\eps}\Bigr)(1+\xi^{2})^{\mu/2}}^{2}\dd{\xi}\biggr)^{1/2}.
 \end{multline*}
Due to the assumption on $m_0$ we have $(1-m_0(\xi))(1+\xi^{2})^{\mu/2}\leq \abs{\xi}^{\mu}(1+1/r_0^{2})$. For $\abs{\xi}\geq r_0$ this follows from $1\leq \abs{\xi/r_0}$ while for $\abs{\xi}\leq r_0$ we even have $(1-m_0(\xi))(1+\xi^{2})^{\mu/2}=0$. Thus, we can further estimate
 \begin{multline*}
  \norm{(E_{0}^{c}f(\eps \cdot))\ee^{\im k \cdot}}_{H^{\mu}}\leq (1+k+k^{2})^{\mu/2}\Bigl(1+\frac{1}{r_0^{2}}\Bigr)\frac{1}{\eps}\biggl(\int_{\R}\abs[\Big]{\hat{f}\Bigl(\frac{\xi}{\eps}\Bigr)\abs{\xi}^{\mu}}^{2}\dd{\xi}\biggr)^{1/2}\\*
  =(1+k+k^{2})^{\mu/2}\Bigl(1+\frac{1}{r_0^{2}}\Bigr)\eps^{\mu-\frac{1}{2}}\biggl(\int_{\R}\abs*{\hat{f}(\xi)\abs{\xi}^{\mu}}^{2}\dd{\xi}\biggr)^{1/2}\leq (1+k+k^{2})^{\mu/2}\Bigl(1+\frac{1}{r_0^{2}}\Bigr)\eps^{\mu-\frac{1}{2}}\norm{f}_{H^{\mu}}.
 \end{multline*}
 The second claim follow similarly noting that 
  \begin{equation*}
  \norm{\widehat{(E_{0}^{c}f(\eps \cdot))\ee^{\im k\cdot}}}_{L^{1}}=\frac{1}{\eps}\int_{\R}(1-m_0(\xi))\abs[\Big]{\hat{f}\Bigl(\frac{\xi}{\eps}\Bigr)}\dd{\xi}.
 \end{equation*}
 Due to the properties of $m_0$ we have $(1-m_0(\xi))\leq C_{r_0} \abs{\xi}^{\mu}$ for all $\mu\geq 0$. Thus,
  \begin{multline*}
    \norm{\widehat{(E_{0}^{c}f(\eps \cdot))\ee^{\im k\cdot}}}_{L^{1}}\leq C_{r_0}\frac{1}{\eps}\int_{\R}\abs{\xi}^{\mu}\abs[\Big]{\hat{f}\Bigl(\frac{\xi}{\eps}\Bigr)}\dd{\xi}=C_{r_0}\eps^{\mu}\int_{\R}\abs{\xi}^{\mu}\abs{\hat{f}(\xi)}\dd{\xi}\\*
    =C_{r_0}\eps^{\mu}\int_{\R}(1+\xi^{2})^{-\gamma/2}(1+\xi^{2})^{\gamma/2}\abs{\xi}^{\mu}\abs{\hat{f}(\xi)}\dd{\xi}\leq C_{r_0}\eps^{\mu}\biggl(\int_{\R}(1+\xi^{2})^{-\gamma}\dd{\xi}\biggr)^{1/2}\norm{f}_{H^{\mu+\gamma}}\\*
    \leq C_{r_0,\gamma}\eps^{\mu}\norm{f}_{H^{\mu+1}}.
  \end{multline*}
\end{proof}
The following two lemmas provide estimates of scaled variables in $H^\mu$ and $L^1$ respectively.
\begin{lemma}\label{Lem:scaled:Sob:norm}
 For $\eps\leq 1$ and $f\in H^{\mu}$, we have
 \begin{equation*}
  \norm{f(\eps \cdot)\ee^{\im k\cdot}}_{H^{\mu}}\leq (1+k+k^{2})^{\mu/2}\eps^{-1/2}\norm{f}_{H^{\mu}}.
 \end{equation*}
\end{lemma}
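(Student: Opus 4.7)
The plan is to argue directly from the Fourier-side definition of the $H^\mu$ norm. By Plancherel we have
\[
 \norm{f(\eps\cdot)\ee^{\im k\cdot}}_{H^{\mu}}^{2}=\int_{\R}\abs[\Big]{\F\bigl(f(\eps\cdot)\ee^{\im k\cdot}\bigr)(\xi)}^{2}(1+\xi^{2})^{\mu}\dxi,
\]
so the first step is to evaluate the Fourier transform. Since the Fourier transform intertwines dilations with $(\F f(\eps\,\cdot))(\xi)=\eps^{-1}\hat f(\xi/\eps)$, and multiplication by $\ee^{\im k x}$ produces a translation on the Fourier side, one obtains
\[
 \F\bigl(f(\eps\cdot)\ee^{\im k\cdot}\bigr)(\xi)=\tfrac{1}{\eps}\hat f\bigl((\xi-k)/\eps\bigr).
\]

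Next I would substitute $\eta=(\xi-k)/\eps$, which contributes a factor $\eps$ from $\dxi=\eps\dd{\eta}$ and yields
\[
 \norm{f(\eps\cdot)\ee^{\im k\cdot}}_{H^{\mu}}^{2}=\frac{1}{\eps}\int_{\R}\abs{\hat f(\eta)}^{2}\bigl(1+(\eps\eta+k)^{2}\bigr)^{\mu}\dd{\eta}.
\]
The exponent $\eps^{-1/2}$ in the target bound is exactly the square root of this prefactor, so everything hinges on a Peetre-type estimate for the weight.

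The main (but elementary) step is therefore the pointwise inequality
\[
 \bigl(1+(\eps\eta+k)^{2}\bigr)^{\mu}\leq (1+k+k^{2})^{\mu}\,(1+\eta^{2})^{\mu}
\]
for $\eps\leq 1$, which one verifies by expanding the square and using $2k\eps\eta\leq k(1+\eps^{2}\eta^{2})$ together with $\eps^{2}\leq 1$ to control the cross term; this is the same type of weight shift that is used in the proof of Lemma~\ref{Lem:E0c} (see the estimate $(1+(k+\xi)^{2})^{\mu/2}\leq (1+k+k^{2})^{\mu/2}(1+\xi^{2})^{\mu/2}$ employed there). Inserting this bound in the previous display gives
\[
 \norm{f(\eps\cdot)\ee^{\im k\cdot}}_{H^{\mu}}^{2}\leq \frac{(1+k+k^{2})^{\mu}}{\eps}\int_{\R}\abs{\hat f(\eta)}^{2}(1+\eta^{2})^{\mu}\dd{\eta}=\frac{(1+k+k^{2})^{\mu}}{\eps}\norm{f}_{H^{\mu}}^{2},
\]
and taking square roots yields the claimed estimate. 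No real obstacle is expected: the only delicate part is packaging the shift in the weight efficiently, and the factor $\eps^{-1/2}$ is forced by the scaling of the Fourier transform under the dilation $x\mapsto \eps x$.
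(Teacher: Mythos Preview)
Your proof is correct and essentially identical to the paper's: both compute the Fourier transform of $f(\eps\cdot)\ee^{\im k\cdot}$, change variables $\eta=(\xi-k)/\eps$, and then use the elementary weight inequality $1+(k+\eps\eta)^{2}=1+k^{2}+2k\eps\eta+\eps^{2}\eta^{2}\leq(1+k+k^{2})+(1+k)\eta^{2}\leq(1+k+k^{2})(1+\eta^{2})$ via $2k\eps\eta\leq k(1+\eps^{2}\eta^{2})$ and $\eps\leq 1$. The only cosmetic difference is that the paper carries the square root throughout rather than squaring first.
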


\begin{proof}
 \begin{multline*}
  \norm{f(\eps \cdot)\ee^{\im k\cdot}}_{H^{\mu}}=\norm{\widehat{f(\eps \cdot)\ee^{\im k\cdot}}}_{L^{2}_{\mu}}=\frac{1}{\eps}\biggl(\int_{\R}\abs*{\hat{f}\Bigl(\frac{\xi-k}{\eps}\Bigr)(1+\xi^{2})^{\mu/2}}^{2}\dd{\xi}\biggr)^{1/2}\\*
  =\eps^{-1/2}\biggl(\int_{\R}\abs*{\hat{f}(\xi)(1+(k+\eps \xi)^{2})^{\mu/2}}^{2}\dd{\xi}\biggr)^{1/2}\leq (1+k+k^{2})^{\mu/2}\eps^{-1/2}\norm{f}_{H^{\mu}}.
 \end{multline*}
Note that in the last step we used that $1+(k+\eps \xi)^{2}=1+k^2+2k\eps \xi+\eps^{2}\xi^{2}\leq (1+k+k^{2})+(1+k)\xi^{2}$.
\end{proof}

\begin{lemma}\label{Lem:L1:Htheta}
 For any $k\in \R$ and $f$ such that $\hat{f}\in L^1$ we have
 \begin{equation*}
  \norm{\widehat{f(\eps \cdot)\ee^{\im k \cdot}}}_{L^{1}}=\norm{\hat{f}}_{L^{1}}.
 \end{equation*}
Moreover, if $\mu>1/2$ and $f\in H^{\mu}(\R)$ we have
 \begin{equation*}
  \norm{\widehat{f(\eps \cdot)\ee^{\im k \cdot}}}_{L^{1}}\leq C_{\mu}\norm{f}_{H^{\mu}}.
 \end{equation*}
\end{lemma}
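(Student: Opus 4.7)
The plan is to prove both statements by directly computing the Fourier transform of the modulated-and-scaled function and then using elementary substitution and Cauchy--Schwarz.

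For the first identity, I would use two standard Fourier identities: the scaling rule $\widehat{f(\eps \cdot)}(\xi) = \eps^{-1}\hat{f}(\xi/\eps)$ and the modulation rule $\widehat{\ee^{\im k\cdot}g}(\xi) = \hat{g}(\xi-k)$. Combining them yields
\[
\widehat{f(\eps \cdot)\ee^{\im k\cdot}}(\xi) = \frac{1}{\eps}\hat{f}\Bigl(\frac{\xi-k}{\eps}\Bigr),
\]
so that the change of variables $\eta=(\xi-k)/\eps$ in the $L^1$ integral gives the claimed equality $\norm{\widehat{f(\eps \cdot)\ee^{\im k \cdot}}}_{L^{1}} = \norm{\hat{f}}_{L^1}$ at once. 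This part is essentially a one-line substitution.

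For the second inequality, by the first part it suffices to show $\norm{\hat f}_{L^1}\lesssim \norm{f}_{H^\mu}$ when $\mu>1/2$. I would insert the weight $(1+\xi^2)^{\mu/2}$ and apply Cauchy--Schwarz:
\[
\int_{\R}\abs{\hat f(\xi)}\dxi = \int_{\R}(1+\xi^2)^{-\mu/2}(1+\xi^2)^{\mu/2}\abs{\hat f(\xi)}\dxi \leq \Bigl(\int_{\R}(1+\xi^2)^{-\mu}\dxi\Bigr)^{1/2}\norm{f}_{H^\mu}.
\]
The weight integral is finite precisely when $2\mu>1$, which is the hypothesis, so one obtains the constant $C_\mu$ of the statement.

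There is really no obstacle here; both parts are elementary manipulations of the Fourier transform. The only point that one must not miss is that the hypothesis $\mu>1/2$ is used exactly to ensure integrability of $(1+\xi^2)^{-\mu}$ over $\R$, which is why the Hölder/Sobolev embedding into functions with absolutely integrable Fourier transform requires this threshold in one dimension.
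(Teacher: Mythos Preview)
Your proof is correct and follows essentially the same approach as the paper: the first identity is obtained by the scaling/modulation formula $\widehat{f(\eps\cdot)\ee^{\im k\cdot}}(\xi)=\eps^{-1}\hat f((\xi-k)/\eps)$ together with substitution, and the second estimate by inserting the weight $(1+\xi^2)^{\mu/2}$ and applying Cauchy--Schwarz, using $\mu>1/2$ for the integrability of $(1+\xi^2)^{-\mu}$.
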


\begin{proof}
 \begin{equation*}
  \norm{\widehat{f(\eps \cdot)\ee^{\im k \cdot}}}_{L^{1}}=\frac{1}{\eps}\int_{\R}\abs[\Big]{\hat{f}\Bigl(\frac{\xi-k}{\eps}\Bigr)}\dd{\xi}=\int_{\R}\abs{\hat{f}(\xi)}\dd{\xi}=\norm{\hat{f}}_{L^{1}}.
 \end{equation*}
Moreover,
 \begin{equation*}
  \norm{\hat{f}}_{L^{1}}=\int_{\R}\abs{\hat{f}(\xi)}\dd{\xi}=\int_{\R}(1+\xi^{2})^{-\mu/2}(1+\xi^{2})^{\mu/2}\abs{\hat{f}(\xi)}\dd{\xi}\leq \biggl((1+\xi^{2})^{-\mu}\biggr)^{1/2}\norm{f}_{H^{\mu}}.
 \end{equation*}
\end{proof}

The next statement provides the scaling properties and estimates in $H^\mu$ of the fractional Laplacian applied to scaled spatial variables.

\begin{lemma}\label{Lem:frac:Lap:scaling}
 Let $\eps\leq 1$ and $\nu\in(0,1)$. For $f\in H^{\mu+2\nu}$, we have
 \begin{equation*}
  (-\Delta)^{\nu}(E_{0}f(\eps \cdot))=\eps^{2\nu}E_{0}((-\Delta)^{\nu}f)(\eps \cdot)
 \end{equation*}
and
 \begin{equation*}
  \norm{(-\Delta)^{\nu}(E_{0}f(\eps \cdot))}_{H^{\mu}}\lesssim \eps^{2\nu-1/2}\norm{f}_{H^{\mu+2\nu}}.
 \end{equation*}
\end{lemma}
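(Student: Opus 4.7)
The plan is to first establish the identity as a consequence of the scaling property \eqref{eq:frac:Lap:scaling} together with the commutativity of the two Fourier multipliers $(-\Delta)^\nu$ and $E_0$, and then obtain the norm estimate as a short chain of inequalities built from the auxiliary lemmas already proved in this subsection.

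For the identity, I would set $g\vcc= E_0 f$ and apply \eqref{eq:frac:Lap:scaling} with $a=\eps$ and exponent $\power=2\nu\in(0,2)$, which immediately gives $(-\Delta)^\nu[g(\eps\cdot)]=\eps^{2\nu}\bigl[(-\Delta)^\nu g\bigr](\eps\cdot)$. Since $E_0$ (with symbol $m_0$) and $(-\Delta)^\nu$ (with symbol $\abs{\xi}^{2\nu}$) are both Fourier multipliers, they commute on $H^{\mu+2\nu}$, so $(-\Delta)^\nu g=E_0(-\Delta)^\nu f$. Substituting back yields the claimed identity.

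For the norm bound, the idea is to transfer the fractional Laplacian through the scaling and the projection so that the scale factor can be extracted cleanly. Starting from the identity I would factor out $\eps^{2\nu}$, then apply Lemma~\ref{Lem:scaled:Sob:norm} with $k=0$ to the function $h\vcc= E_0(-\Delta)^\nu f$ to replace $\norm{h(\eps\cdot)}_{H^\mu}$ by $\eps^{-1/2}\norm{h}_{H^\mu}$. A subsequent application of Lemma~\ref{Lem:E0} strips the $E_0$, and the elementary pointwise bound $\abs{\xi}^{4\nu}\leq(1+\xi^2)^{2\nu}$ yields $\norm{(-\Delta)^\nu f}_{H^\mu}\leq\norm{f}_{H^{\mu+2\nu}}$. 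Multiplying these contributions gives the stated bound $\eps^{2\nu-1/2}\norm{f}_{H^{\mu+2\nu}}$.

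There is essentially no serious obstacle: the whole argument is a routine composition of the scaling relation with earlier lemmas. The only point that requires a line of care is to note that the hypothesis $f\in H^{\mu+2\nu}$ together with $\nu\in(0,1)$ ensures $(-\Delta)^\nu f\in H^\mu$, so that Lemmas~\ref{Lem:E0} and~\ref{Lem:scaled:Sob:norm} can be legitimately applied to $E_0(-\Delta)^\nu f$, and that the commutation of $E_0$ and $(-\Delta)^\nu$ holds on this space since both are realised via $L^\infty$ Fourier symbols.
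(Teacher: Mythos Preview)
Your proposal is correct and follows essentially the same route as the paper's proof: the identity comes from the scaling relation \eqref{eq:frac:Lap:scaling} (together with the commutation of the Fourier multipliers $E_0$ and $(-\Delta)^\nu$, which the paper leaves implicit), and the norm estimate is obtained by chaining Lemma~\ref{Lem:scaled:Sob:norm}, Lemma~\ref{Lem:E0}, and the elementary bound $\norm{(-\Delta)^\nu f}_{H^\mu}\leq\norm{f}_{H^{\mu+2\nu}}$. Your version is in fact slightly more explicit than the paper's, which compresses the chain into a single displayed line.
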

\begin{proof}
The first statement is an immediate consequence of \eqref{eq:frac:Lap:scaling}. Using \cref{Lem:E0,Lem:scaled:Sob:norm} the second claim then follows noting
 \begin{equation*}
  \norm{(-\Delta)^{\nu}(E_{0}f(\eps \cdot))}_{H^{\mu}}\leq \eps^{2\nu-1/2}\norm{(-\Delta)^\nu f}_{H^{\mu+2\nu}}\leq \eps^{2\nu-1/2}\norm{f}_{H^{\mu+2\nu}}.
 \end{equation*}
\end{proof}

The following lemma is a classical result which relates products with Fourier variables with derivatives in spatial variables.

\begin{lemma}\label{Lem:critical:symbol}
For sufficiently regular $f$ we have the following relations:
 \begin{equation*}
  \F^{-1}\Bigl((\xi\mp 1)^2\widehat{f(\eps\cdot)\ee^{\pm\im \cdot}}\Bigr)=-\eps^{2}f''(\eps x)\ee^{\pm\im x}
 \end{equation*}
 as well as
  \begin{equation*}
  \F^{-1}\Bigl((\xi\mp 1)^2\widehat{E_{0}f(\eps\cdot)\ee^{\pm\im \cdot}}\Bigr)=-\eps^{2}E_{0}(f''(\eps \cdot)\ee^{\pm\im \cdot}).
 \end{equation*}
\end{lemma}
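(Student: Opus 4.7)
The lemma is purely Fourier-analytic, so the plan is to reduce both identities to the classical rules (i) modulation in physical space corresponds to translation in Fourier space and (ii) differentiation corresponds to multiplication by $\im\xi$.

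Concretely, I would first observe the modulation identity
\begin{equation*}
 \widehat{g(\cdot)\ee^{\pm\im\cdot}}(\xi)=\hat g(\xi\mp 1)
\end{equation*}
applied to $g=f(\eps\cdot)$, which yields $\widehat{f(\eps\cdot)\ee^{\pm\im\cdot}}(\xi)=\widehat{f(\eps\cdot)}(\xi\mp 1)$. Then the derivative identity $\widehat{g''}(\eta)=-\eta^{2}\hat g(\eta)$ applied to $g=f(\eps\cdot)$, together with $(f(\eps\cdot))''=\eps^{2}f''(\eps\cdot)$, gives
\begin{equation*}
 (\xi\mp 1)^{2}\widehat{f(\eps\cdot)}(\xi\mp 1)=-\widehat{(f(\eps\cdot))''}(\xi\mp 1)=-\eps^{2}\widehat{f''(\eps\cdot)}(\xi\mp 1).
\end{equation*}
Applying $\F^{-1}$ and using modulation once more in the other direction yields the first identity.

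For the second identity, the only additional ingredient is that $E_{0}$ is the Fourier multiplier with symbol $m_{0}(\xi)$, so it commutes with multiplication by $(\xi\mp 1)^{2}$ in Fourier space. Inserting the factor $m_{0}(\xi)$ into the computation above (interpreting the left-hand side as the Fourier symbol $(\xi\mp 1)^{2}m_{0}(\xi)$ acting on $\widehat{f(\eps\cdot)\ee^{\pm\im\cdot}}$, and matching it on the right-hand side with $-\eps^{2}m_{0}(\xi)$ acting on $\widehat{f''(\eps\cdot)\ee^{\pm\im\cdot}}$) immediately produces the claimed identity.

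There is no real obstacle: the computation is short, and the only point to be careful about is keeping track of the modulation/translation bookkeeping so that $(\xi\mp 1)^{2}$ is correctly paired with the shifted Fourier transform. Since the statement is only claimed for sufficiently regular $f$, no delicate integrability issues arise.
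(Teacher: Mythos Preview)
Your plan is correct and complete; the paper itself offers no proof beyond calling the lemma ``a classical result,'' so your short Fourier-analytic argument via modulation and the derivative rule is exactly what is needed. One small bookkeeping point: since $E_0$ in the paper's conventions acts on $f(\eps\cdot)$ before the modulation by $\ee^{\pm\im\cdot}$, after the shift the symbol sits at $m_0(\xi\mp 1)$ rather than $m_0(\xi)$, and correspondingly the right-hand side should be read as $-\eps^{2}\bigl(E_{0}(f''(\eps\cdot))\bigr)\ee^{\pm\im\cdot}$; with this reading your commutation argument goes through verbatim.
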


Although the Fourier multipliers in \cref{eq:mult:r:pm,eq:mult:m:pm} have no nice representation in terms of spatial derivatives analogously to the previous statement, we still can obtain the following estimates.

\begin{lemma}\label{Lem:est:mult:norm}
 For $f\in H^{\theta+3}$ we have
 \begin{equation*}
  \begin{aligned}
   \norm{\F^{-1}(\mathfrak{r}^{\pm}(\widehat{(E_{0}f(\eps \cdot)\ee^{\pm\im \cdot})})}_{H^{\theta}}&\lesssim \eps^{3-1/2}\norm{f'''}_{H^{\theta}}\\
   \norm{\F^{-1}(m^{1,\pm}(\widehat{(E_{0}f(\eps \cdot)\ee^{\pm 2\im \cdot})})}_{H^{\theta}}&\lesssim \eps^{1/2}\norm{f'}_{H^{\theta}}\\
   \norm{\F^{-1}(m^{2,\pm}(\widehat{(E_{0}f(\eps \cdot)\ee^{\pm 2\im \cdot})})}_{H^{\theta}}&\lesssim \eps^{3-1/2}\norm{f'''}_{H^{\theta}}.
  \end{aligned}
 \end{equation*}
\end{lemma}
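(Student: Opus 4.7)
The plan is to combine two observations. First, since $E_0$ has Fourier symbol $m_0=\chi_{B_{r_0}(0)}$, the Fourier transform of $E_0 f(\eps\cdot)$ is supported in $B_{r_0}(0)$, and multiplication by $\ee^{\pm\im k\cdot}$ shifts this support by $\pm k$. Hence the relevant integrand in each of the three expressions is supported in $B_{r_0}(\pm k)$ for $k=1$ or $k=2$ as appropriate. Second, on these balls Lemma~\ref{Lem:est:symbols} (applied e.g.\@ with $\nu=r_0$, which is admissible since $3r_0<\delta<1$) yields the pointwise majorisations $\abs{\mathfrak{r}^{\pm}(\xi)}\leq C\abs{\xi\mp 1}^3$, $\abs{m^{1,\pm}(\xi)}\leq C\abs{\xi\mp 2}$ and $\abs{m^{2,\pm}(\xi)}\leq C\abs{\xi\mp 2}^3$ on the respective supports.

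Inserting these bounds into the weighted $L_{\theta}^2$-norm reduces the task to estimating quantities of the form $\norm{\F^{-1}((\xi\mp k)^j\widehat{E_0 f(\eps\cdot)\ee^{\pm\im k\cdot}})}_{H^\theta}$ for $(k,j)\in\{(1,3),(2,1),(2,3)\}$. The Fourier symbol $(\xi\mp k)^j$ corresponds to the differential operator $(-\im\del_x\mp k)^j$, and a direct computation in the spirit of Lemma~\ref{Lem:critical:symbol} gives $(-\im\del_x\mp k)^j(f(\eps\cdot)\ee^{\pm\im k\cdot})=(-\im\eps)^j f^{(j)}(\eps\cdot)\ee^{\pm\im k\cdot}$. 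Since all operators involved are Fourier multipliers and therefore commute, this identity carries over under $E_0$, so the quantity above equals $\eps^j\norm{E_0(f^{(j)}(\eps\cdot)\ee^{\pm\im k\cdot})}_{H^\theta}$.

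From here Lemma~\ref{Lem:E0} strips the projector $E_0$ without loss, and Lemma~\ref{Lem:scaled:Sob:norm} produces the scaling factor $\eps^{-1/2}$, leading to $\eps^{j-1/2}\norm{f^{(j)}}_{H^\theta}$. For $(k,j)=(1,3)$ and $(2,3)$ this is $\eps^{3-1/2}\norm{f'''}_{H^\theta}$, while for $(k,j)=(2,1)$ it is $\eps^{1/2}\norm{f'}_{H^\theta}$, exactly as claimed. No serious obstacle is expected: the proof is essentially an accounting exercise that glues together the Taylor bounds from Lemma~\ref{Lem:est:symbols} with the differential calculus identity and the two scaling lemmas. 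The one point requiring a brief verification is that the Fourier supports really lie inside the ball on which Lemma~\ref{Lem:est:symbols} gives the polynomial control of the multiplier, but this is immediate from the standing smallness assumption $3r_0<\delta<1$.
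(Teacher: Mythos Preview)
Your proposal is correct and follows essentially the same route as the paper: both arguments use the compact Fourier support coming from $E_0$ to invoke the polynomial bounds of Lemma~\ref{Lem:est:symbols}, then convert the factor $(\xi\mp k)^j$ into $\eps^j$ times the $j$-th derivative of $f$, and finish with Lemma~\ref{Lem:scaled:Sob:norm}. The only cosmetic difference is that the paper performs the computation directly inside the $L^2_\theta$ integral, whereas you phrase the same step through the differential operator $(-\im\partial_x\mp k)^j$; one small caution is that your expression $E_0(f^{(j)}(\eps\cdot)\ee^{\pm\im k\cdot})$ should be read as $(E_0 f^{(j)}(\eps\cdot))\ee^{\pm\im k\cdot}$ for the claimed identity to hold, which is consistent with the paper's notational convention.
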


\begin{proof}
By means of Lemma~\ref{Lem:est:symbols} we have
 \begin{multline*}
  \norm{\F^{-1}(\mathfrak{r}^{\pm}(\widehat{(E_{0}f(\eps \cdot)\ee^{\pm\im \cdot})})}_{H^{\theta}}=\norm{\mathfrak{r}^{\pm}(\widehat{(E_{0}f(\eps \cdot)\ee^{\pm\im \cdot})}}_{L_{\theta}^{2}}\\*
  =\biggl(\int_{\R}\abs[\Big]{\mathfrak{r}^{\pm}(\xi)m_0(\xi\mp 1)\frac{1}{\eps}\hat{f}\Bigl(\frac{\xi\mp 1}{\eps}\Bigr)}^{2}(1+\xi^{2})^{\theta}\dd{\xi}\biggr)^{1/2}\\*
  \lesssim \biggl(\int_{\R}\abs[\Big]{\frac{1}{\eps}m_0(\xi\mp 1)(\xi\mp 1)^{3}\hat{f}\Bigl(\frac{\xi\mp 1}{\eps}\Bigr)}^{2}(1+\xi^{2})^{\theta}\dd{\xi}\biggr)^{1/2}\\*
  =\eps^{3}\biggl(\int_{\R}\abs[\Big]{m_0(\xi\mp 1)\frac{1}{\eps}\widehat{f'''}\Bigl(\frac{\xi\mp 1}{\eps}\Bigr)}^{2}(1+\xi^{2})^{\theta}\dd{\xi}\biggr)^{1/2}\lesssim \eps^{3}\norm{f'''(\eps \cdot)\ee^{\pm i\cdot}}_{H^{\theta}}\leq \eps^{3-1/2}\norm{f'''}_{H^{\theta}}.
 \end{multline*}
The last step follows from Lemma~\ref{Lem:scaled:Sob:norm}. The other two estimates follow in the same way.
\end{proof}

\section{The residuum}\label{Sec:residuum}

\subsection{Computing the residuum}

Based on the representation for the Fourier symbol of $-(1-(-\Delta)^{\power/2})^2$ in Section~\ref{Sec:approximation:SH-operator} we will compute the residuum (or more precisely the terms up to order $\eps^4$). For this, we will consider the different terms in \eqref{eq:SH:qc} separately and collect them later according to their mode:

First, we get for the time derivative that
\begin{multline*}
 -\del_{t}(\eps\Psi)=-\eps^{3}\bigl(\del_{T}(E_{0}A(\eps x, \eps^{2}t))\ee^{\im x}+\del_{T}(E_{0}\bar{A}(\eps x,\eps^{2} t))\ee^{-\im x}\bigr)\\*
 -\eps^{4}\bigl(\del_{T}(E_{0}A_{2}(\eps x,\eps^{2} t))\ee^{2\im x}+\del_{T}(E_{0}\bar{A}_{2}(\eps x,\eps^{2}t))\ee^{-2\im x}+\del_{T}(E_{0}A_{0}(\eps x,\eps^{2}t))\bigr).
\end{multline*}
Moreover
\begin{multline*}
 \eps^{2}(\eps\Psi(x,t))=\eps^{3}\Bigl((E_{0}A(\eps x,\eps^{2}t))\ee^{\im x}+(E_{0}\bar{A}(\eps x,\eps^{2}t))\ee^{-\im x}\Bigr)\\*
 +\eps^{4}\Bigl((E_{0}A_2(\eps x,\eps^{2}t))\ee^{2\im x}+(E_{0}\bar{A}_2(\eps x,\eps^{2}t))\ee^{-2\im x}+(E_{0}A_0(\eps x,\eps^{2}t))\Bigr).
\end{multline*}
According to \cref{Lem:Taylor:symbol,Lem:critical:symbol,Lem:remainder:at:two} we have
\begin{multline*}
 -(1-(-\Delta)^{\power/2})^{2}(\eps \Psi)=\power^{2}\eps^{3}\bigl((E_{0}A''(\eps x))\ee^{\im x}+(E_{0}\bar{A}''(\eps x))\ee^{-\im x}\bigr)\\*
 -\eps\F^{-1}\bigl(R^{+}(\xi)\widehat{(E_{0}A(\eps \cdot))\ee^{\im \cdot}}+R^{-}(\xi)\widehat{(E_{0}\bar{A}(\eps \cdot))\ee^{-\im \cdot}}\bigr)\\*
-\power^{2}\eps^{2}(-\del_{x}^{2}\pm 2\im \del_{x}+1)\bigl((E_{0}A_{2}(\eps x))\ee^{2\im x} +(E_{0}\bar{A}_{2}(\eps x))\ee^{-2\im x}\bigr)-(1-(-\Delta)^{\power/2})^{2}(\eps^{2}(E_{0}A_{0})(\eps x))\\*
-\eps^{2}\Bigl(\mathfrak{c}^{+}(E_{0}A_{2}(\eps x))\ee^{2\im x}+\mathfrak{c}^{-}(E_{0}\bar{A}_{2}(\eps x))\ee^{-2\im x}+M^{1,+}\bigl((E_{0}A_{2}(\eps x))\ee^{2\im x}\bigr)+M^{1,-}\bigl(E_{0}\bar{A}_{2}(\eps x)\bigr)\ee^{-2\im x}\bigr)\\*
+M^{2,+}\bigl((E_{0}A_{2}(\eps x))\ee^{2\im x}\bigr)+M^{2,-}\bigl((E_{0}\bar{A}_{2}(\eps x))\ee^{-2\im x}\bigr)\Bigr).
\end{multline*}
Expanding further we get
\begin{multline*}
-(1-(-\Delta)^{\power/2})^{2}(\eps \Psi)=\power^{2}\eps^{3}\bigl((E_{0}A''(\eps x))\ee^{\im x}+(E_{0}\bar{A}''(\eps x))\ee^{-\im x}\bigr)\\*
 -\eps\F^{-1}\bigl(R^{+}(\xi)\widehat{(E_{0}A(\eps \cdot))\ee^{\im \cdot}}+R^{-}(\xi)\widehat{(E_{0}\bar{A}(\eps \cdot))\ee^{-\im \cdot}}\bigr)\\*
+\Bigl(\power^{2}\eps^{4}(E_{0}A_{2}''(\eps x))\ee^{2\im x}+4s^{2}\im\eps^{3}(E_{0}A_{2}'(\eps x))\ee^{2\im x}-4s^2 \eps^{2}(E_{0}A_{2}(\eps x))\ee^{2\im x}\\*
-2\power^2 \im \eps^{3}(E_{0}A_{2}'(\eps x))\ee^{2\im x}+4\power^{2}\eps^{2}(E_{0}A_{2}(\eps x))\ee^{2\im x}-\power^{2}\eps^{2}(E_{0}A_{2}(\eps x))\ee^{2\im x}\\*
+\power^{2}\eps^{4}(E_{0}\bar{A}_{2}''(\eps x))\ee^{-2\im x}-4\power^{2}\im \eps^{3}(E_{0}\bar{A}_{2}'(\eps x))\ee^{-2\im x}-4\power^{2}\eps^{2}(E_{0}\bar{A}_{2}(\eps x))\ee^{-2\im x}\\*
+2\power^{2}\im \eps^{3}(E_{0}\bar{A}_{2}'(\eps x))\ee^{-2\im x}+4\power^{2}\eps^{2}(E_{0}\bar{A}_{2}(\eps x))\ee^{-2\im x}-\power^{2}\eps^{2}(E_{0}\bar{A}_{2}(\eps x))\ee^{-2\im x}\Bigr)\\*
-\Bigl(\eps^{2}(E_{0}A_{0}(\eps x))-2\eps^{2+\power}(E_{0}((-\Delta)^{\power/2}A_{0})(\eps x))+\eps^{2+2\power}(E_{0}((-\Delta)^{\power}A_{0})(\eps x))\Bigr)\\*
-\eps^{2}\Bigl(\mathfrak{c}^{+}(E_{0}A_{2}(\eps x))\ee^{2\im x}+\mathfrak{c}^{-}(E_{0}\bar{A}_{2}(\eps x))\ee^{-2\im x}+M^{1,+}\bigl((E_{0}A_{2}(\eps x))\ee^{2\im x}\bigr)+M^{1,-}\bigl((E_{0}\bar{A}_{2}(\eps x))\ee^{-2\im x}\bigr)\\*
+M^{2,+}\bigl((E_{0}A_{2}(\eps x))\ee^{2\im x}\bigr)+M^{2,-}\bigl((E_{0}\bar{A}_{2}(\eps x))\ee^{-2\im x}\bigr)\Bigr).
\end{multline*}
Combining, we find
\begin{multline*}
-(1-(-\Delta)^{\power/2})^{2}(\eps \Psi)\\*
 \shoveleft{=\power^{2}\eps^{3}\bigl((E_{0}A''(\eps x))\ee^{\im x}+(E_{0}\bar{A}''(\eps x))\ee^{-\im x}\bigr)-\eps\F^{-1}\bigl(R^{+}(\xi)\widehat{(E_{0}A(\eps \cdot))\ee^{\im \cdot}}+R^{-}(\xi)\widehat{(E_{0}\bar{A}(\eps \cdot))\ee^{-\im \cdot}}\bigr)}\\*
+\Bigl(\power^{2}\eps^{4}(E_{0}A_{2}''(\eps x))\ee^{2\im x}+2\power^{2}\im\eps^{3}(E_{0}A_{2}'(\eps x))\ee^{2\im x}-\power^2 \eps^{2}(E_{0}A_{2}(\eps x))\ee^{2\im x}\\*
+\power^{2}\eps^{4}(E_{0}\bar{A}_{2}''(\eps x))\ee^{-2\im x}-2\power^{2}\im \eps^{3}(E_{0}\bar{A}_{2}(\eps x))\ee^{-2\im x}-\power^{2}\eps^{2}(E_{0}\bar{A}_{2}(\eps x))\ee^{-2\im x}\Bigr)\\*
-\Bigl(\eps^{2}(E_{0}A_{0}(\eps x))-2\eps^{2+\power}(E_{0}((-\Delta)^{\power/2}A_{0})(\eps x))+\eps^{2+2\power}(E_{0}((-\Delta)^{\power}A_{0})(\eps x))\Bigr)\\*
-\eps^{2}\Bigl(\mathfrak{c}^{+}(E_{0}A_{2}(\eps x))\ee^{2\im x}+\mathfrak{c}^{-}(E_{0}\bar{A}_{2}(\eps x))\ee^{-2\im x}+M^{1,+}\bigl((E_{0}A_{2}(\eps x))\ee^{2\im x}\bigr)+M^{1,-}\bigl((E_{0}\bar{A}_{2}(\eps x))\ee^{-2\im x}\bigr)\\*
+M^{2,+}\bigl((E_{0}A_{2}(\eps x))\ee^{2\im x}\bigr)+M^{2,-}\bigl((E_{0}\bar{A}_{2}(\eps x))\ee^{-2\im x}\bigr)\Bigr).
\end{multline*}
It remains to compute the non-linear terms:
\begin{multline*}
 a_{1}(\eps \Psi)^{2}\\*
 \shoveleft{=a_{1}\eps^{2}\Bigl((E_{0}A)^{2}\ee^{2\im x}+2(E_{0}A)(E_{0}\bar{A})+(E_{0}\bar{A})^{2}\ee^{-2\im x}\Bigr)}\\*
 +2a_{1}\eps^{3}\Bigl((E_{0}A)(E_{0}A_{2})\ee^{3\im x}+(E_{0}A)(E_{0}\bar{A}_{2})\ee^{-\im x}+(E_{0}A)(E_{0}A_{0})\ee^{\im x}+(E_{0}\bar{A})(E_{0}A_{2})\ee^{\im x}\\*
 +(E_{0}\bar{A})(E_{0}\bar{A}_{2})\ee^{-3\im x}+(E_{0}\bar{A})(E_{0}A_{0})\ee^{-\im x}\Bigr)\\*
 +a_{1}\eps^{4}\Bigl((E_{0}A_{2})\ee^{2\im x}+(E_{0}\bar{A}_{2})\ee^{-2\im x}+(E_{0}A_{0})\Bigr)^{2}.
\end{multline*}
Similarly
\begin{multline*}
 a_{2}(\eps \Psi)^{3}\\*
 \shoveleft{=a_{2}\eps^{3}\Bigl((E_{0}A)^{3}\ee^{3\im x}+3(E_{0}A)^{2}(E_{0}\bar{A})\ee^{\im x}+3(E_{0}A)(E_{0}\bar{A})^{2}\ee^{-\im x}+(E_{0}\bar{A})^{3}\ee^{-3\im x}\Bigr)}\\*
 +3a_{2}\eps^{4}\Bigl((E_{0}A)^{2}(E_{0}A_{2})\ee^{4\im x}+2(E_{0}A)(E_{0}\bar{A})(E_{0}A_{2})\ee^{2\im x}+(E_{0}\bar{A})^{2}(E_{0}A_{2})+(E_{0}A)^{2}(E_{0}\bar{A}_{2})\\*
 +2(E_{0}A)(E_{0}\bar{A})(E_{0}\bar{A}_{2})\ee^{-2\im x}+(E_{0}\bar{A})^{2}(E_{0}\bar{A}_{2})\ee^{-4\im x}+(E_{0}A)^{2}(E_{0}A_{0})\ee^{2\im x}\\*
 +2(E_{0}A)(E_{0}\bar{A})(E_{0}A_{0})+(E_{0}\bar{A})^{2}(E_{0}A_{0})\ee^{-2\im x}\Bigr)+\Ord(\eps^{5}).
\end{multline*}
In summary, we can write the residuum in the following form
\begin{equation*}
 \Res(\eps \Psi)=\sum_{k=-4}^{4}z_{k}\ee^{-k\im x}+\Ord(\eps^{5})
\end{equation*}
with
\begin{equation*}
 \begin{aligned}
  z_{0}&=-\eps^{4}\del_{T}(E_{0}A_{0}(\eps x))+\eps^{4}(E_{0}A_{0}(\eps x))-\eps^{2}(E_{0}A_{0}(\eps x))+2\eps^{2+\power}(E_{0}((-\Delta)^{\power/2}A_{0})(\eps x))\\
  &\quad-\eps^{2+2\power}(E_{0}((-\Delta)^{\power}A_{0})(\eps x))-2a_{1}\eps^{2}(E_{0}A(\eps x))(E_{0}\bar{A}(\eps x))-2a_{1}\eps^{4}(E_{0}A_{2}(\eps x))(E_{0}\bar{A}_{2}(\eps x))\\
  &\quad-a_{1}\eps^{4}(E_{0}A_{0}(\eps x))^{2}(\eps x)-3a_{2}\eps^{4}(E_{0}\bar{A}(\eps x))^{2}(E_{0}A_{2}(\eps x))-3a_{2}\eps^{4}(E_{0}A(\eps x))^{2}(E_{0}\bar{A}_{2}(\eps x))\\
  &\quad-6a_{2}\eps^{4}(E_{0}A(\eps x))(E_{0}\bar{A}(\eps x))(E_{0}A_{0}(\eps x))\\
  z_{1}&=-\eps^{3}\del_{T}(E_{0}A(\eps x))+\eps^{3}(E_{0}A(\eps x))+\power^{2}\eps^{3}(E_{0}A''(\eps x))-\eps\F^{-1}(R^{+}(\xi)\widehat{(E_{0}A(\eps\cdot))\ee^{\im \cdot}})\ee^{-\im x}\\
  &\quad-2a_{1}\eps^{3}(E_{0}A(\eps x))(E_{0}A_{0}(\eps x))-2a_1\eps^3(E_{0}\bar{A}(\eps x))(E_{0}A_{2}(\eps x))-3a_{2}\eps^{3}(E_{0}A(\eps x))^{2}(E_{0}\bar{A}(\eps x))\\
  z_{2}&=-\eps^{4}\del_{T}(E_{0}A_{2}(\eps x))+\eps^{4}(E_{0}A_{2}(\eps x))+\power^{2}\eps^{4}(E_{0}A_{2}''(\eps x))+2\im \power^{2}\eps^{3}(E_{0}A_{2}'(\eps x))\\
  &\quad-\power^{2}\eps^{2}(E_{0}A_{2}(\eps x))-\mathfrak{c}^{+}\eps^{2}(E_{0}A_{2}(\eps x))-\eps^{2}M^{1,+}(E_{0}A_{2}(\eps\cdot)\ee^{\im \cdot})\ee^{-2\im x}-\eps^{2}M^{2,+}(E_{0}A_{2}(\eps\cdot)\ee^{2\im \cdot})\ee^{-2\im x}\\
  &\quad -a_{1}\eps^{2}(E_{0}A(\eps x))^{2}-2a_{1}\eps^{4}(E_{0}A_{2}(\eps x))(E_{0}A_{0}(\eps x))\\
  &\quad-6a_{2}\eps^{4}(E_{0}A(\eps x))(E_{0}\bar{A}(\eps x))(E_{0}A_{2}(\eps x))-3a_{2}\eps^{4}(E_{0}A(\eps x))^{2}(E_{0}A_{0}(\eps x))\\
  z_{3}&=-2a_{1}\eps^{3}(E_{0}A(\eps x))(E_{0}A_{2}(\eps x))-a_{2}\eps^{3}(E_{0}A(\eps x))^{3}\\
  z_{4}&=-a_{1}\eps^{4}(E_{0}A_{2}(\eps x))^{2}-3a_{2}(E_{0}A(\eps x))^{2}(E_{0}A_{2}(\eps x))\\
  z_{-k}&=\bar{z}_{k} \qquad \text{for }k=1\ldots 4.
 \end{aligned}
\end{equation*}

\subsection{Estimating the residuum}

We will now verify the estimates \ref{It:ass:5} on the residuum which will conclude the proof of our main result Theorem~\ref{Thm:main}. For this, we note that we can neglect the exponential factors since they only result in a shift on the Fourier side, which does not affect the norm. It thus suffices to consider the coefficients $z_{k}$. We also note that from the choice of $m_c$ and $m_0$, the terms $z_{\pm1}$ correspond to the critical modes while the remaining ones are stable, i.e.\@ $E_c \Res(\eps \Psi)=z_1\ee^{\im x}+z_{-1}\ee^{-\im x}$ and $E_s(\eps\Psi)=\Res(\eps\Psi)-(z_1\ee^{\im x}+z_{-1}\ee^{-\im x})$.

\subsubsection{The critical modes -- estimating $z_{1}$}

We first rewrite
\begin{multline*}
 z_{1}=\eps^{3}\Bigl[\del_{T}(E_{0}^{c}A(\eps x))-\del_{T}A(\eps x,\eps^{2} t)-(E_{0}^{c}A(\eps x))+A(\eps x)-\power^{2}(E_{0}^{c}A''(\eps x))+\power^{2}A''(\eps x)\\*
  +2a_{1}(E_{0}^{c}A(\eps x))(E_{0}A_{0}(\eps x))+2a_{1}A(E_{0}^{c}A_{0}(\eps x))+4a_{1}^2\abs{A}^{2}A+2a_1(E_0^c\bar{A}(\eps x))(E_0 A_2(\eps x))\\*
 +2a_1\bar{A}(E_0^cA_2(\eps x))+\frac{2a_1^2}{\power^2+\mathfrak{c}^{+}}\abs{A}^2A+3a_{2}(E_{0}^{c}A(\eps x))(E_{0}A(\eps x))(E_{0}\bar{A}(\eps x))\\*
  +3a_{2}A(E_{0}^{c}A)(E_{0}\bar{A})+3a_{2}A^{2}(E_{0}^{c}\bar{A})-3a_{2}\abs{A}^{2}A\Bigr]\\*
 -\eps\F^{-1}(R^{+}(\xi)\widehat{(E_{0}A(\eps\cdot))\ee^{\im \cdot}})\ee^{-\im x}.
 \end{multline*}
Thus, due to \eqref{eq:GL}, it remains
\begin{multline*}
 z_{1}=\eps^{3}\Bigl[\del_{T}(E_{0}^{c}A(\eps x))-(E_{0}^{c}A(\eps x))+A(\eps x)-\power^{2}(E_{0}^{c}A''(\eps x))+2a_{1}(E_{0}^{c}A(\eps x))(E_{0}A_{0}(\eps x))\\*
 +2a_{1}A(E_{0}^{c}A_{0})+2a_1(E_0^c\bar{A}(\eps x))(E_0 A_2(\eps x))+2a_1\bar{A}(E_0^cA_2(\eps x))\\* 
 +3a_{2}(E_{0}^{c}A(\eps x))(E_{0}A(\eps x))(E_{0}\bar{A}(\eps x))+3a_{2}A(E_{0}^{c}A)(E_{0}\bar{A})+3a_{2}A^{2}(E_{0}^{c}\bar{A})\Bigr]\\*
 -\eps\F^{-1}(R^{+}(\xi)\widehat{(E_{0}A(\eps\cdot))\ee^{\im \cdot}})\ee^{-\im x}.
 \end{multline*}
To estimate $\del_{T}(E_{0}^{c}A)$ we rely again on \eqref{eq:GL}, i.e.\@ we can replace $\del_{T}A$ by the right-hand side of \eqref{eq:GL}. Together with \cref{Lem:E0c,Lem:Sob:norm:prod,Lem:L1:Htheta} we thus get
\begin{multline*}
 \sup_{t\in[0,T_{*}/\eps^{2}]}\norm{\del_{T}E_{0}^{c}(A(\eps x,\eps^{2}t))}_{H^{\theta}}\lesssim \eps^{\theta-1/2}\Bigl[\norm{A}_{H^{\theta}}+\norm{A''}_{H^{\theta}}+\norm{\abs{A}^{2}A}_{H^{\theta}}\Bigr]\\*
 \leq \eps^{\theta-1/2}\Bigl[\norm{A}_{H^{\theta+2}}+\norm{A}_{H^{\theta}}^{3}\Bigr].
\end{multline*}
Recalling also from \eqref{eq:choice:of:parameters} that $A_{0}=-2a_1\abs{A}^{2}=-2a_1A\bar{A}$, $A_2=-\frac{a_1}{\power^2+\mathfrak{c}^{+}}A^2$ and using additionally Lemma~\ref{Lem:E0c} we deduce similarly
\begin{equation*}
 \begin{aligned}
  \sup_{t\in[0,T_{*}/\eps^{2}]}\norm{E_{0}^{c}(A(\eps x,\eps^{2}t))}_{H^{\theta}}&\lesssim \eps^{\theta-1/2}\norm{A}_{H^{\theta}}\\
  \sup_{t\in[0,T_{*}/\eps^{2}]}\norm{(E_{0}^{c}A''(\eps x,\eps^{2}t))}_{H^{\theta}}&\lesssim \eps^{\theta-1/2}\norm{A}_{H^{\theta+2}}\\
  \sup_{t\in[0,T_{*}/\eps^{2}]}\norm{(E_{0}^{c}A(\eps x))(E_{0}A_{0}(\eps x))+A(E_{0}^{c}A_{0})}_{H^{\theta}}&\lesssim \eps^{\theta-1/2}\norm{A}_{H^{\theta+1}}^{3}\\
  \sup_{t\in[0,T_{*}/\eps^{2}]}\norm{(E_{0}^{c}\bar{A}(\eps x))(E_{0}A_{2}(\eps x))+\bar{A}(E_{0}^{c}A_{2})}_{H^{\theta}}&\lesssim \eps^{\theta-1/2}\norm{A}_{H^{\theta+1}}^{3}\\
  \sup_{t\in[0,T_{*}/\eps^{2}]}\norm{(E_{0}^{c}A(\eps x))(E_{0}A(\eps x))(E_{0}\bar{A}(\eps x))+A(E_{0}^{c}A)(E_{0}\bar{A})+A^{2}(E_{0}^{c}\bar{A})}_{H^{\theta}}&\lesssim \eps^{\theta-1/2}\norm{A}_{H^{\theta+1}}^{3}.
   \end{aligned}
\end{equation*}
Finally, from Lemma~\ref{Lem:est:mult:norm} we have
\begin{equation*}
 \sup_{t\in[0,T_{*}/\eps^{2}]}\norm{\F^{-1}(R^{+}(\xi)\widehat{(E_{0}A(\eps\cdot))\ee^{\im \cdot}})\ee^{-\im x}}_{H^{\theta}}\lesssim \eps^{3-1/2}\norm{A'''}_{H^{\theta}}.
\end{equation*}
In summary this yields
\begin{equation*}
 \sup_{t\in[0,T_{*}/\eps^{2}]}\norm{z_{1}}_{H^{\theta}}\lesssim (\eps^{3+\theta-1/2}+\eps^{4-1/2})(\norm{A}_{H^{\theta+3}}+\norm{A}_{H^{\theta+1}}^{3})\lesssim \eps^{\beta+2}(\norm{A}_{H^{\theta+3}}+\norm{A}_{H^{\theta+1}}^{3})
\end{equation*}
if $\theta\geq 1$ while we recall that $\beta=3/2$.

\subsubsection{The stable modes}

\paragraph*{Estimating $z_{0}$:}

Taking into account \cref{Lem:E0,eq:GL,eq:choice:of:parameters} we obtain similarly as for $z_{1}$ that 
\begin{equation*}
 \norm{\del_{T}(E_{0}A_{0}(\eps \cdot))}_{H^{\theta}}\lesssim \eps^{-1/2}(\norm{A}_{H^{\theta+2}}+\norm{A}_{H^{\theta}}^{3})\norm{A}_{H^{\theta}}
\end{equation*}
Moreover
\begin{equation*}
 \begin{aligned}
  \norm{E_{0}A_{0}(\eps \cdot)}_{H^{\theta}}&\lesssim \eps^{-1/2}\norm{A}_{H^{\theta}}^{2}\\
  \norm{E_{0}((-\Delta)^{\power/2}A_{0})(\eps \cdot)}_{H^{\theta}}&\lesssim\eps^{-1/2}\norm{A}_{H^{\theta+s}}^{2}\\
  \norm{E_{0}((-\Delta)^{\power}A_{0})(\eps \cdot)}_{H^{\theta}}&\lesssim\eps^{-1/2}\norm{A}_{H^{\theta+2s}}^{2}\\
  \norm{(E_{0}A_{2}(\eps x))(E_{0}\bar{A}_{2}(\eps x))}_{H^{\theta}}&\lesssim \eps^{-1/2}\norm{A}_{H^{\theta}}^{4}\\
  \norm{(E_{0}A_{0}(\eps x))^{2}(\eps x)}_{H^{\theta}}&\lesssim \eps^{-1/2}\norm{A}_{H^{\theta}}^{2}\\
  \norm{(E_{0}\bar{A}(\eps x))^{2}(E_{0}A_{2}(\eps x))}_{H^{\theta}}&\lesssim \eps^{-1/2}\norm{A}_{H^{\theta}}^{3}\\
  \norm{(E_{0}\bar{A}(\eps x))^{2}(E_{0}A_{2}(\eps x))}_{H^{\theta}}&\lesssim \eps^{-1/2}\norm{A}_{H^{\theta}}^{3}\\
  \norm{(E_{0}A(\eps x))^{2}(E_{0}\bar{A}_{2}(\eps x))}_{H^{\theta}}&\lesssim \eps^{-1/2}\norm{A}_{H^{\theta}}^{3}
 \end{aligned}
\end{equation*}
Finally, recalling also \eqref{eq:choice:of:parameters} we rewrite
\begin{multline*}
 -(E_{0}A_{0}(\eps x)-2a_{1}(E_{0}A(\eps x))(E_{0}\bar{A}(\eps x))=2a_1(E_{0}(A\bar{A}(\eps x)))-2a_{1}(E_{0}A(\eps x))(E_{0}\bar{A}(\eps x))\\*
 =-2a_{1}(E_{0}^{c}\abs{A}^{2}(\eps x))+2a_{1}(E_{0}^{c}A(\eps x))\bar{A}(\eps x)+2a_{1}(E_{0}A(\eps x))(E_{0}^{c}\bar{A}(\eps x)).
\end{multline*}
Together with \cref{Lem:E0c,Lem:Sob:norm:prod} we thus conclude
\begin{equation*}
 \norm{-(E_{0}A_{0}(\eps x)-2a_{1}(E_{0}A(\eps x))(E_{0}\bar{A}(\eps x))}_{H^{\theta}}\lesssim \eps^{\theta-1/2}\norm{A}_{H^{\theta+1}}^{2}.
\end{equation*}
In summary
\begin{equation*}
 \begin{split}
   \norm{z_{0}}_{H^{\theta}}&\lesssim \bigl(\eps^{4+\theta-1/2}+\eps^{4-1/2}+\eps^{2+\power-1/2}+\eps^{2+\theta-1/2}\bigr)\bigl[\norm{A}_{H^{\theta+2}}^{2}+\norm{A}_{H^{\theta+2\power}}^{2}\bigr](1+\norm{A}_{H^{\theta}}^{2})\\
   &\lesssim \eps^{\beta+1}\bigl[\norm{A}_{H^{\theta+2}}^{2}+\norm{A}_{H^{\theta+2\power}}^{2}\bigr](1+\norm{A}_{H^{\theta}}^{2})
 \end{split}
\end{equation*}
if $\power,\theta\geq 1$ since $\beta=3/2$.

\paragraph*{Estimating $z_{2}$:}

We proceed as before, i.e.\@
\begin{equation*}
 \norm{\del_{T}(E_{0}A_{2}(\eps \cdot))}_{H^{\theta}}\lesssim \eps^{-1/2}(\norm{A}_{H^{\theta+2}}+\norm{A}_{H^{\theta}}^{3})\norm{A}_{H^{\theta}}
\end{equation*}
Moreover
\begin{equation*}
 \begin{aligned}
  \norm{E_{0}A_{2}(\eps \cdot)}_{H^{\theta}}&\lesssim \eps^{-1/2}\norm{A}_{H^{\theta}}^{2}\\
  \norm{E_{0}A_2''(\eps \cdot)}_{H^{\theta}}&\lesssim\eps^{-1/2}\norm{A}_{H^{\theta+2}}^{2}\\
  \norm{E_{0}A_2'(\eps \cdot)}&\lesssim\eps^{-1/2}\norm{A}_{H^{\theta+1}}^{2}\\
  \norm{(E_{0}A_{2}(\eps x))(E_{0}A_{0}(\eps x))}_{H^{\theta}}&\lesssim \eps^{-1/2}\norm{A}_{H^{\theta}}^{3}\\
  \norm{(E_{0}A(\eps x))(E_{0}\bar{A}(\eps x))(E_{0}A_{2}(\eps x))}_{H^{\theta}}&\lesssim \eps^{-1/2}\norm{A}_{H^{\theta}}^{4}\\
  \norm{(E_{0}A(\eps x))^{2}(E_{0}A_{0}(\eps x))}_{H^{\theta}}&\lesssim \eps^{-1/2}\norm{A}_{H^{\theta}}^{3}\\
 \end{aligned}
\end{equation*}
 Furthermore, from Lemma~\ref{Lem:est:mult:norm}
 \begin{equation*}
  \begin{aligned}
  \norm{M^{1,+}(E_{0}A_{2}(\eps\cdot))\ee^{-2\im x}}_{H^{\theta}}&\lesssim \eps^{1-1/2}\norm{A}_{H^{\theta+1}}^{2}\\
  \norm{M^{2,+}(E_{0}A_{2}(\eps\cdot))\ee^{-2\im x}}_{H^{\theta}}&\lesssim \eps^{3-1/2}\norm{A}_{H^{\theta+3}}^{2}
  \end{aligned}
 \end{equation*}
 Finally, for the terms of $\Ord(\eps^{2})$ we again rewrite taking also \eqref{eq:choice:of:parameters} into account which yields
 \begin{multline*}
  -\power^{2}(E_{0}A_{2}(\eps x))-\mathfrak{c}^{+}(E_{0}A_{2}(\eps x))-a_{1}(E_{0}A(\eps X))^{2}\\*
  =-a_1(E_{0}^{c}A^{2}(\eps x))+a_1\Bigl(A-E_{0}A\Bigr)A+a_{1}(E_{0}A)A-a_{1}(E_{0}A)^{2}\\*
  =-a_{1}(E_{0}^{c}A^{2}(\eps x))+a_{1}(E_{0}^{c}A(\eps x))A(\eps x)+a_{1}(E_{0}A(\eps x))(E_{0}^{c}A(\eps x)).
 \end{multline*}
Thus,
\begin{multline*}
 \norm{-\power^{2}(E_{0}A_{2}(\eps x))-\mathfrak{c}^{+}(E_{0}A_{2}(\eps x))-a_{1}(E_{0}A(\eps X))^{2}}_{H^{\theta}}\\*
 \leq \norm{-a_{1}(E_{0}^{c}A^{2}(\eps x))+a_{1}(E_{0}^{c}A(\eps x))A(\eps x)+a_{1}(E_{0}A(\eps x))(E_{0}^{c}A(\eps x))}_{H^{\theta}}\lesssim \eps^{\theta-1/2}\norm{A}_{H^{\theta+1}}^{2}.
\end{multline*}
In summary
\begin{equation*}
 \norm{z_{2}}_{H^{\theta}}\lesssim \bigl(\eps^{4-1/2}+\eps^{3-1/2}+\eps^{2+\theta-1/2}\bigr)\bigl[\norm{A}_{H^{\theta+3}}^{2}+\norm{A}_{H^{\theta}}^{4}\bigr]\lesssim \eps^{\beta+1}\bigl[\norm{A}_{H^{\theta+3}}^{2}+\norm{A}_{H^{\theta}}^{4}\bigr]
\end{equation*}
if $\theta\geq 1$ since $\beta=3/2$.

 \paragraph*{Estimating $z_3$ and $z_4$:}
 
 Proceeding as above, we get
 \begin{equation*}
  \begin{aligned}
   \norm{z_{3}}_{H^{\theta}}&\lesssim \eps^{3-1/2}\norm{A}_{H^{\theta}}^{3}\lesssim \eps^{\beta+1}\norm{A}_{H^{\theta}}^{3}\\
   \norm{z_{4}}_{H^{\theta}}&\lesssim \eps^{4-1/2}\norm{A}_{H^{\theta}}^{3}\lesssim \eps^{\beta+1}\norm{A}_{H^{\theta}}^{3}
  \end{aligned}
 \end{equation*}
 since $\beta=3/2$.\medskip
 
 \textbf{Acknowledgments:} CK and ST would like to thank Cinzia Soresina for interesting discussions regarding space-fractional PDEs. CK and ST would like to thank the VolkswagenStiftung for support via a Lichtenberg Professorship awarded to CK.


\begin{thebibliography}{10}

\bibitem{Achleitneretal}
F.~Achleitner, G.~Akagi, C.~Kuehn, J.M. Melenk, J.D.M. Rademacher, C.~Soresina,
  and J.~Yang.
\newblock Fractional dissipative pdes.
\newblock {\em arXiv}, pages 1--63, 2023.

\bibitem{AchleitnerKuehn1}
F.~Achleitner and C.~Kuehn.
\newblock Traveling waves for a bistable equation with nonlocal-diffusion.
\newblock {\em Adv. Differential Equat.}, 20(9):887--936, 2015.

\bibitem{AKMR21}
Franz Achleitner, Christian Kuehn, Jens~M. Melenk, and Alexander Rieder.
\newblock Metastable speeds in the fractional {A}llen-{C}ahn equation.
\newblock {\em Appl. Math. Comput.}, 408:Paper No. 126329, 18, 2021.

\bibitem{BeH21}
A.~Behzadan and M.~Holst.
\newblock Multiplication in {S}obolev spaces, revisited.
\newblock {\em Ark. Mat.}, 59(2):275--306, 2021.

\bibitem{bucur2016nonlocal}
Claudia Bucur, Enrico Valdinoci, et~al.
\newblock {\em Nonlocal diffusion and applications}, volume~20.
\newblock Springer, 2016.

\bibitem{BurrageHaleKay}
K.~Burrage, N.~Hale, and D.~Kay.
\newblock An efficient implicit {FEM} scheme for fractional-in-space
  reaction-diffusion equations.
\newblock {\em SIAM J. Sci. Comput.}, 34(4):A2145--A2172, 2012.

\bibitem{CabreRoquejoffre}
X.~Cabr{\'e} and J.M. Roquejoffre.
\newblock The influence of fractional diffusion in {Fisher-KPP} equations.
\newblock {\em Commun. Math. Phys.}, 320(3):679--722, 2013.

\bibitem{CaffarelliSilvestre}
L.~Caffarelli and L.~Silvestre.
\newblock An extension problem related to the fractional {Laplacian}.
\newblock {\em Commun. Partial Differential Equat.}, 32(8):1245--1260, 2007.

\bibitem{ChenKimSong}
Z.-Q. Chen, P.~Kim, and R.~Song.
\newblock Heat kernel estimates for the {Dirichlet} fractional {Laplacian}.
\newblock {\em J. Eur. Math. Soc.}, 12:1307--1329, 2010.

\bibitem{ColletEckmann1}
P.~Collet and J.P. Eckmann.
\newblock The time dependent amplitude equation for the {Swift-Hohenberg}
  problem.
\newblock {\em Comm. Math. Phys.}, 132(1):139--153, 1990.

\bibitem{CrossHohenberg}
M.C. Cross and P.C. Hohenberg.
\newblock Pattern formation outside of equilibrium.
\newblock {\em Rev. Mod. Phys.}, 65(3):851--1112, 1993.

\bibitem{del-Castillo-NegreteCarrerasLynch}
D.~del Castillo-Negrete, B.A. Carreras, and V.E. Lynch.
\newblock Front dynamics in reaction-diffusion systems with {Levy} flights: a
  fractional diffusion approach.
\newblock {\em Phys. Rev. Lett.}, 91(1):018302, 2003.

\bibitem{DPV12}
Eleonora Di~Nezza, Giampiero Palatucci, and Enrico Valdinoci.
\newblock Hitchhiker's guide to the fractional {S}obolev spaces.
\newblock {\em Bull. Sci. Math.}, 136(5):521--573, 2012.

\bibitem{EhstandKuehnSoresina}
N.~Ehstand, C.~Kuehn, and C.~Soresina.
\newblock Numerical continuation for fractional {PDEs}: sharp teeth and bloated
  snakes.
\newblock {\em Comm. Nonl. Sci. Numer. Simul.}, page 105762, 2021.

\bibitem{Eva10}
Lawrence~C. Evans.
\newblock {\em Partial differential equations}, volume~19 of {\em Graduate
  Studies in Mathematics}.
\newblock American Mathematical Society, Providence, RI, second edition, 2010.

\bibitem{faustmann-melenk-praetorius21}
Markus Faustmann, Jens~Markus Melenk, and Dirk Praetorius.
\newblock Quasi-optimal convergence rate for an adaptive method for the
  integral fractional {L}aplacian.
\newblock {\em Math. Comp.}, 90(330):1557--1587, 2021.

\bibitem{golovin2008turing}
Alexander~A Golovin, Bernard~J Matkowsky, and Vladimir~A Volpert.
\newblock Turing pattern formation in the {B}russelator model with
  superdiffusion.
\newblock {\em SIAM Journal on Applied Mathematics}, 69(1):251--272, 2008.

\bibitem{Hoyle}
R.~Hoyle.
\newblock {\em Pattern Formation: An Introduction to Methods}.
\newblock Cambridge University Press, 2006.

\bibitem{KSM92}
Pius Kirrmann, Guido Schneider, and Alexander Mielke.
\newblock The validity of modulation equations for extended systems with cubic
  nonlinearities.
\newblock {\em Proc. Roy. Soc. Edinburgh Sect. A}, 122(1-2):85--91, 1992.

\bibitem{KuehnBook1}
C.~Kuehn.
\newblock {\em PDE Dynamics: An Introduction}.
\newblock SIAM, 2019.

\bibitem{KuehnThrom}
C.~Kuehn and S.~Throm.
\newblock Validity of amplitude equations for non-local non-linearities.
\newblock {\em J. Math. Phys.}, 59:071510, 2018.

\bibitem{MainardiLuchkoPagnini}
F.~Mainardi, Y.~Luchko, and G.~Pagnini.
\newblock The fundamental solution of the space-time fractional diffusion
  equation.
\newblock {\em Fract. Calc. Appl. Anal.}, 4(2):153--192, 2001.

\bibitem{MetzlerKlafter}
R.~Metzler and J.~Klafter.
\newblock The random walk's guide to anomalous diffusion: a fractional dynamics
  approach.
\newblock {\em Phys. Rep.}, 339(1):1--77, 2000.

\bibitem{MorganDawes}
D.~Morgan and J.H.P. Dawes.
\newblock The {Swift-Hohenberg} equation with a nonlocal nonlinearity.
\newblock {\em Physica D}, 270:60--80, 2014.

\bibitem{Sch94b}
G.~Schneider.
\newblock A new estimate for the {G}inzburg-{L}andau approximation on the real
  axis.
\newblock {\em J. Nonlinear Sci.}, 4(1):23--34, 1994.

\bibitem{ScU17}
Guido Schneider and Hannes Uecker.
\newblock {\em Nonlinear {PDE}s}, volume 182 of {\em Graduate Studies in
  Mathematics}.
\newblock American Mathematical Society, Providence, RI, 2017.
\newblock A dynamical systems approach.

\bibitem{servadei2014spectrum}
R.~Servadei and E.~Valdinoci.
\newblock On the spectrum of two different fractional operators.
\newblock {\em Proceedings of the Royal Society of Edinburgh Section A:
  Mathematics}, 144(4):831--855, 2014.

\bibitem{vanHarten}
A.~van Harten.
\newblock On the validity of the {Ginzburg-Landau} equation.
\newblock {\em J. Nonlinear Sci.}, 1(4):397--422, 1991.

\end{thebibliography}
\end{document}